\documentclass[reqno, 11pt]{amsart}
\usepackage{amsmath}
 \usepackage{amssymb}
\usepackage{amsthm}
\usepackage{times}
\usepackage{latexsym}
\usepackage[mathscr]{eucal}

\numberwithin{equation}{section}
 
  \newtheorem{theorem}{Theorem}[section]
  \newtheorem{proposition}[theorem]{Proposition}
  \newtheorem{lemma}[theorem]{Lemma}

  \newtheorem{definition}[theorem]{Definition}

\title[Induced and intrinsic Hashiguchi connections on Finsler submanifolds ]{Induced and intrinsic Hashiguchi connections on Finsler submanifolds }
\author[ Fortun\'{e} Massamba and Salomon Joseph Mbatakou]{ Fortun\'{e} Massamba*, \, Salomon Joseph Mbatakou** }

\newcommand{\acr}{\newline\indent}

\address{\llap{*\,}  School of Mathematics, Statistics and Computer Science\acr
 University of KwaZulu-Natal\acr
 Private Bag X01, Scottsville 3209\acr
South Africa}
\email{massfort@yahoo.fr, Massamba@ukzn.ac.za}

\address{\llap{**\,}  School of Mathematics, Statistics and Computer Science\acr
 University of KwaZulu-Natal\acr
 Private Bag X01, Scottsville 3209\acr
South Africa}
\email{mbatakou@gmail.com, mbatakous@ukzn.ac.za}

\thanks{}

\subjclass[2010]{Primary 53D10; Secondary 53C60}

\keywords{ Hashiguchi connection; Finslerian submanifold; Pulled-back Finsler connection.}

\begin{document}
\maketitle

 \begin{abstract}
We study the geometry of Finsler submanifolds using the pulled-back approach.  We define the Finsler normal pulled-back bundle and obtain the induced geometric objects, namely, induced pullback Finsler  connection, normal pullback Finsler connection, second fundamental form and shape operator. Under a certain condition, we prove that induced and intrinsic Hashiguchi connections coincide on the pulled-back bundle of Finsler submanifold. 
 \end{abstract}

 \section {Introduction}

Let $(\widetilde {M},\widetilde {F})$ be a Finsler manifold and $T\widetilde {M}_{0}$ be its slit tangent bundle. There exist in the literature, several frameworks for the study of Finsler geometry. For example an approach through the double tangent bundle $TT\widetilde {M}_{0}$ (Grifone's approach, \cite{GR}), an approach via the vertical subbundle of $TT\widetilde {M}_{0}$ (see Bejancu-Farran \cite{bl}) and the pulled-back bundle approach (see Bao-Chern-Shen \cite{AA}). The latter is for us the most natural approach, because it facilitates the analogy with the Riemannian geometry. 
In \cite{bl} Bejancu and Farran described the theory of Finsler submanifold via the vertical bundle and applied their study to some induced geometric objects as connections and curvatures. The study was applied for the induced and intrinsic Cartan, Chern and Berwald connections, but in Finsler geometry there is also a famous connection, namely Hashiguchi connection which also deserves to be studied. The purpose of the present paper is to suggest under the  pulled-back approach of Finsler submanifold, a comparison between the induced and the intrinsic Hashiguchi connections on the Finsler submanifold. The paper consists of three sections. In Section \ref{section1} we provide a brief account of the basic definitions and concepts that are used throughout the paper. For more details we refer to \cite{bl}, \cite{mb1}, \cite{mb2}. In Section \ref{NormTang},  we construct the Finsler normal and tangential pulled-back bundle and obtain the induced geometric objects: Induced and intrinsic Finsler-Ehresmann connection, the pulled-back Finsler connection, the second fundamental form and the shape operator. Finally, the Section \ref{InduIntri} is devoted to the comparison between the induced and the intrinsic Hashiguchi connection on Finsler submanifold.

 \section{Preliminaries}\label{section1}

Let $\pi: TM \rightarrow M $ be a tangent bundle of a connected smooth Finsler manifold $M$ of dimension $m$.
We denote by $v= (x,y)$ the points in $TM$ if $y\in \pi^{-1}(x)= T_{x}M$. We denote by $O(M)$ the zero section of $TM$,
and by $TM_{0}$ the slit tangent bundle $ TM\setminus O(M)$. We introduce a coordinate system on $TM$ as follows.
Let $U\subset M$ be an open set with local coordinate $(x^{1},...,x^{m})$.
By setting $v= y^{i}\frac{\partial}{\partial x^{i}}$ for every $v\in\pi^{-1}(U) $, we introduce a
local coordinate $(x,y)=(x^{1},...,x^{m},y^{1},...,y^{m})$ on $\pi^{-1}(U)$.

\begin{definition}{\rm
 A function $F: TM \rightarrow [0,+\infty[$ is called a Finsler structure or Finsler metric on $M$ if:
  \begin{enumerate}
  \item[(i)] $F\in C^{\infty}(TM_{0})$,
\item[(ii)]  $F(x,\lambda y)= \lambda F(x,y)$, for all $ \lambda > 0$,
\item[(iii)]  The $m\times m$ Hessian matrix $( g_{ij})$, where
\begin{equation}
\label{ft} g_{ij}:= \frac{1}{2}(F^{2})_{y^{i}y^{j}}
\end{equation}
  is positive-definite at all $(x,y)$ of $TM_{0}$.
  \end{enumerate}
  }
\end{definition}
The pair $(M, F )$ is called \textit{Finsler manifold}. The pulled-back bundle  $\pi^{\ast}TM$ is a vector bundle over the slit tangent bundle $ TM_{0}$, defined by
\begin{equation}
\pi^{\ast}TM:= \{(x,y,v)\in TM_{0}\times TM: v \in T_{\pi(x,y)}M \}.
\end{equation}
 By (\ref{ft}), the pulled-back vector bundle  $\pi^{\ast}TM$ admits a natural Riemannian metric
 \begin{equation}\label{tefun}
 g:= g_{ij}dx^{i}\otimes dx^{j}.
 \end{equation}
 This is in general called the \textit{fundamental tensor} (see \cite{AA} for more details). Likewise, there are some Finslerian tensors which play  important roles in the Finslerian geometry, namely, the distinguished section 
 \begin{equation}\label{DistSect}
 l= \frac{y^{i}}{F}\frac{\partial}{\partial x^{i}}, 
 \end{equation}
 and the Cartan tensor given by
 \begin{equation}\label{Atensor}
 A= A_{ijk}dx^{i}\otimes dx^{j}\otimes dx^{k},
 \end{equation}
 where
 \begin{equation}
 A_{ijk}:= \frac{F}{2}\frac{\partial g_{ij}}{\partial y^{k}}.
 \end{equation}
Note that, with a slight abuse of notation, $\frac{\partial}{\partial x^{i}}$ and $dx^{i}$ are regarded as sections
 of $\pi^{\ast}TM$ and $\pi^{\ast}T^{\ast}M$, respectively.

Now, for the differential $\pi_{\ast}$ of the submersion $\pi: TM_{0}\rightarrow M$, the vertical subbundle $\mathcal{V}$ of
$TTM_{0}$ is defined by $\mathcal{V}= \ker \pi_{\ast}$, and $\mathcal{V}$ is locally spanned by $\{F\frac{\partial}
{\partial y^{1}},...,F\frac{\partial}{\partial y^{n}}\}$ on each $\pi^{-1}(U)$. Then, it induces the exact sequence
 \begin{equation}\label{sui}
 0\longrightarrow \mathcal{V}  \stackrel{i}{\longrightarrow}  TTM_{0}\stackrel{\pi_{\ast}}{\longrightarrow}
 \pi^{\ast}TM \longrightarrow 0,
 \end{equation}
The horizontal subbundle $\mathcal{H}$ is
defined by a subbundle $\mathcal{H}\subset TTM_{0}$, which is complementary to $\mathcal{V}$. These subbundles
give a smooth splitting
\begin{equation}\label{dec}
TTM_{0} =   \mathcal{H} \oplus   \mathcal{V}.
\end{equation}
Although the vertical subbundle $ \mathcal{V}$ is uniquely determined, the horizontal subbundle is not canonically
determined. An Ehresmann connection of the submersion $\pi:TM_{0}\rightarrow M$ depends on a choice of horizontal subbundles.

In this paper, we shall consider the choice of Ehresmann connection which arises from the Finsler structure $F$,
constructed as follows. Recall that \cite{AF} every Finslerian structure $F$ induces a spray
$$
G= y^{i}\frac{\partial}{\partial x^{i}}- 2 G^{i}(x,y)\frac{\partial}{\partial y^{i}},
$$
in which the spray coefficients  $G^{i}$ are defined by
\begin{equation}
 G^{i}(x,y):= \frac{1}{4}g^{il}\left[2 \frac{\partial g_{jl}}{\partial x^{k}}(x,y)-
 \frac{\partial g_{jk}}{\partial x^{l}}(x,y)\right]y^{j}y^{k},
\end{equation}
where the matrix $(g^{ij})$ means the inverse of $(g_{ij})$.

Define a $\pi^{\ast}TM$-valued smooth form on $TM_{0}$ by
\begin{equation}\label{teta}
\theta = \frac{\partial}{\partial x^{i}}\otimes \frac{1}{F}(dy^{i}+N_{j}^{i}dx^{j}),
\end{equation}
where functions  $N^{i}_{j}(x,y)$ are given by
$$
N^{i}_{j}(x,y):= \frac{\partial G^{i}}{\partial y^{j}}(x,y).
$$
This $\pi^{\ast}TM$-valued smooth form  $\theta$ is globally well defined on $TM_{0}$ \cite{AC}.

From the form $\theta$ defined in (\ref{teta}) which is called \textit{Finsler-Ehresmann form}, we define the \textit{Finsler-Ehresmann connection} as follow.
\begin{definition}{\rm
 A Finsler-Ehresmann connection of the submersion $\pi:TM_{0}\rightarrow M$ is the subbundle $\mathcal{H}$ of $TTM_{0}$
 given by $ \mathcal{H} = \ker \theta $, where $\theta : TTM_{0}\rightarrow \pi^{\ast}TM$ is the bundle morphism defined in  (\ref{teta}),
 and which is complementary to the vertical subbundle $\mathcal{V}$.}
\end{definition}
It is well know that, $\pi^{\ast}TM$ can be naturally identified with the horizontal subbundle $\mathcal{H}$ and the vertical one
$\mathcal{V}$ \cite{AE}.
Thus, any section $\overline{X}$ of $\pi^{\ast}TM$ is considered as a section of $\mathcal{H}$ or a section of $\mathcal{V}$.
We denote by $\overline{X}^{H}$ and $\overline{X}^{V}$ respectively, the section
of $\mathcal{H}$ and the section of $\mathcal{V}$ corresponding to $\overline{X}\in \Gamma (\pi^{\ast}TM)$:
\begin{equation}\label{tr}
 \overline{X}= \frac{\partial}{\partial x^{i}}\otimes \overline{X}^{i}\in \pi^{\ast}TM
\Longleftrightarrow \overline{X}^{H}=\frac{\delta}{\delta x^{i}}\otimes \overline{X}^{i} \in \Gamma(\mathcal{H}),
\end{equation}
and
\begin{equation}
\overline{X}= \frac{\partial}{\partial x^{i}}\otimes \overline{X}^{i}\in \pi^{\ast}TM
\Longleftrightarrow \overline{X}^{V}=F\frac{\partial}{\partial y^{i}}\otimes \overline{X}^{i} \in \Gamma(\mathcal{V}),
\end{equation}
where
$$
\{F\frac{\partial}{\partial y^{i}}:= (\frac{\partial}{\partial x^{i}})^{V}\}_{i= 1,...,m}\;\;\mbox{and}\;\;\{\frac{\delta}{\delta x^{i}}:=\frac{\partial}{\partial x^{i}} - N_{j}^{i}\frac{\partial}{\partial y^{i}} = (\frac{\partial}{\partial x^{i}})^{H}\}_{i= 1,...,m},
$$
are the vertical and horizontal lifts of natural local frame field $\{ \frac{\partial}{\partial x^{1}},..., \frac{\partial}{\partial x^{m}}\}$ with respect to the Finsler-Ehresmann connection $\mathcal{H}$, respectively.

Let $\{dx^{1},...,dx^{m}\}$ be the basis of the dual space $\mathcal{H}^{\ast}$, and $\{\frac{\delta y^{i}}{F}:= \frac{1}{F}(dy^{i} + N^{i}_{j}dx^{j})\}_{i= 1,...,m}$ the basis of $\mathcal{V}^{\ast}$. For two bundle morphisms $\pi_{\ast}$ and $\theta$  from $TTM_{0}$ onto $\pi^{\ast}TM$, we have the following.
\begin{proposition} \label{PropIso}\cite{AE}
 The bundle morphism $\pi_{\ast}$ and $\theta$ satisfy
 $$
 \pi_{\ast}(\overline{X}^{H}) = \overline{X}, \;\; \pi_{\ast}(\overline{X}^{V})= 0,\;\; \theta(\overline{X}^{H})  = 0,\;\;\theta(\overline{X}^{V})= \overline{X},
 $$ 
for every $\overline{X} \in \Gamma(\pi^{\ast}TM)$.
\end{proposition}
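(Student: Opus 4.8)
The plan is to prove all four identities by a direct computation in a local coordinate chart $\pi^{-1}(U)$, using the natural frame $\{\frac{\partial}{\partial x^i}, \frac{\partial}{\partial y^i}\}$ of $TTM_0$ and the dual coframe $\{dx^i, dy^i\}$. Since each identity is tensorial (that is, $C^\infty(TM_0)$-linear) in $\overline{X} = \frac{\partial}{\partial x^i}\otimes\overline{X}^i$, it suffices to verify them on the lifts of the frame sections $\frac{\partial}{\partial x^i}$ and then extend by linearity.

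First I would record how $\pi_*$ acts in coordinates. Because the submersion is $\pi(x,y)=x$, its differential satisfies $\pi_*(\frac{\partial}{\partial x^i}) = \frac{\partial}{\partial x^i}$ and $\pi_*(\frac{\partial}{\partial y^i}) = 0$, the images being read as sections of $\pi^*TM$. Feeding in the horizontal lift $\overline{X}^H = \frac{\delta}{\delta x^i}\otimes\overline{X}^i$ with $\frac{\delta}{\delta x^i} = \frac{\partial}{\partial x^i} - N^k_i\frac{\partial}{\partial y^k}$ gives $\pi_*(\frac{\delta}{\delta x^i}) = \frac{\partial}{\partial x^i}$, whence $\pi_*(\overline{X}^H) = \overline{X}$; feeding in the vertical lift $\overline{X}^V = F\frac{\partial}{\partial y^i}\otimes\overline{X}^i$ annihilates every $\frac{\partial}{\partial y^i}$ and yields $\pi_*(\overline{X}^V) = 0$. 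This disposes of the first two identities.

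For the remaining two I would evaluate the $\pi^*TM$-valued form $\theta$ of (\ref{teta}) on the same two lifts, which amounts to pairing its one-form component $\frac{1}{F}(dy^i + N^i_k dx^k)$ against the frame vectors $\frac{\delta}{\delta x^j}$ and $F\frac{\partial}{\partial y^j}$. Using $dx^k(\frac{\delta}{\delta x^j}) = \delta^k_j$ together with $dy^i(\frac{\delta}{\delta x^j}) = -N^i_j$, the two terms cancel and I obtain $(dy^i + N^i_k dx^k)(\frac{\delta}{\delta x^j}) = 0$, so $\theta(\overline{X}^H) = 0$. On the other hand $(dy^i + N^i_k dx^k)(F\frac{\partial}{\partial y^j}) = F\delta^i_j$, so $\theta(F\frac{\partial}{\partial y^j}) = \frac{\partial}{\partial x^j}$ and therefore $\theta(\overline{X}^V) = \overline{X}$.

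There is no genuine difficulty here; the single computation carrying the content of the statement is the cancellation $(dy^i + N^i_k dx^k)(\frac{\delta}{\delta x^j}) = 0$, which expresses that $\{dx^i, \frac{\delta y^i}{F}\}$ is the coframe dual to $\{\frac{\delta}{\delta x^i}, F\frac{\partial}{\partial y^i}\}$. This duality is the defining feature of the Finsler-Ehresmann connection $\mathcal{H} = \ker\theta$, the horizontal lift having been built exactly so that its vertical correction term absorbs the $N^i_j dx^j$ part of $\theta$; once it is in hand, the four identities reduce to the elementary pairings recorded above and the splitting (\ref{dec}).
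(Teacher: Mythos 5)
Your proof is correct. Note that the paper itself gives no argument for this proposition --- it is quoted from the reference [Aikou--Kozma] --- so there is no internal proof to compare against; your direct coordinate verification supplies exactly the standard argument that the citation points to. The substance is, as you say, the duality pairing: $\pi_*$ kills $\frac{\partial}{\partial y^i}$ and sends $\frac{\delta}{\delta x^i}$ to $\frac{\partial}{\partial x^i}$, while the cancellation $\left(dy^i + N^i_k dx^k\right)\left(\frac{\delta}{\delta x^j}\right) = -N^i_j + N^i_j = 0$ and the pairing $\left(dy^i + N^i_k dx^k\right)\left(F\frac{\partial}{\partial y^j}\right) = F\delta^i_j$ give the two identities for $\theta$; the $C^\infty(TM_0)$-linearity remark legitimately reduces everything to the frame sections. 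One point worth flagging: you silently corrected the paper's own typo in the definition of the horizontal frame (the paper writes $\frac{\delta}{\delta x^{i}} = \frac{\partial}{\partial x^{i}} - N_{j}^{i}\frac{\partial}{\partial y^{i}}$, with a dangling index $j$), replacing it by the standard $\frac{\delta}{\delta x^{i}} = \frac{\partial}{\partial x^{i}} - N^{k}_{i}\frac{\partial}{\partial y^{k}}$; this correction is not optional, since the cancellation that makes $\theta(\overline{X}^H)=0$ fails under any other index placement, and it is also the convention consistent with $\mathcal{H}=\ker\theta$ being complementary to $\mathcal{V}$.
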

The Proposition \ref{PropIso} means that $\mathcal{H} T M_{0}$, as well as $\mathcal{V} T M_{0}$, can be naturally identified with the bundle $\pi^{*} T M$, that is,
\begin{equation}\label{Isomorphs}
 \mathcal{H} T M_{0}\cong \pi^{*} T M\;\;\mbox{and}\;\; \mathcal{V} T M_{0}\cong \pi^{*} T M.
\end{equation}

 \subsection{Pulled-back bundle Finsler connection.} 
Now, by the Finsler-Ehresmann connection and any linear connection on the pulled-back bundle $\pi^{*} T M$, we introduce the concept of Pulled-back bundle Finsler connection. 
\begin{definition}
 Let $(M,F)$ be a Finsler manifold and $\pi^{\ast}TM$ the pulled-back bundle over $TM_{0}$. Suppose that there exist a linear connection $\nabla$ on 
 $\pi^{\ast}TM$ and Finsler-Ehresmann form $\theta$ on $TM_{0}$. Then the pulled-back bundle Finsler connection on $\pi^{\ast}TM$ is the pair $(\ker{\theta},\nabla)$.
\end{definition} 
\noindent
Note that all outstanding connections of the Finsler geometry, namely \cite{mb1}: Cartan, Berwald, Chern and Hashiguchi connections on $\pi^{\ast}TM$ are the pulled-back bundle Finsler connections.

\section{ Normal and tangential Finsler Pulled-Back Bundle}\label{NormTang}

Let $(\widetilde {M},\widetilde {F})$ be a real $(m+n)$- dimensional Finsler manifold. By (\ref{tefun}), there exist a Riemannian metric
$\widetilde {g}$ on the pulled-back bundle $\pi^{\ast}T\widetilde {M}$, whose local components are given by
\begin{equation}
 \widetilde {g}_{ij}(x,y)= \frac{1}{2}\frac{\partial^{2}\widetilde {F}^{2}}{\partial y^{i}\partial y^{j}}.
\end{equation}
The local coordinates on $T\widetilde {M}_{0}$ will be $(x^{i}, y^{i})$, $i\in \{1,\cdots,m+n\}$, where $(x^{i})$ are 
the local coordinates on $\widetilde {M}$.

 In the following, we use the ranges for indices: $i,j,k,\cdots \in \{1,\cdots, m+n\}$; 
 $\alpha, \beta, \gamma,\cdots \in \{1,\cdots, m\}$; $a,b,c \cdots \in \{m+1, \cdots, m+n\}$.
  
  Suppose that $M$ is a real $m$-dimensional submanifold of $\widetilde {M}$ defined by the equations 
  \begin{equation}\label{funsub}
   x^{i}=x^{i}(u^{1},\cdots, u^{m}); \quad rank[B^{i}_{\alpha}]= m;\quad B^{i}_{\alpha}= \frac{\partial x^{i}}{\partial u^{\alpha}}.
  \end{equation}
Denote by $\iota$ the immersion of $M$ in $\widetilde {M}$ and consider the tangent map $\iota_{\ast}$ of $TM_{0}$ in $T\widetilde {M}_{0}$. Locally,
a point of $TM_{0}$ with coordinates $(u^{\alpha}, v^{\alpha})$ is carried by $\iota_{\ast}$ into a point of $T\widetilde {M}_{0}$ with coordinates
$(x^{i}(u), y^{i}(u,v))$, where $x^{i}$ are function in (\ref{funsub}) and 
\begin{equation}
 y^{i}(u,v)= B^{i}_{\alpha}v^{\alpha}.
\end{equation}
Recall that the sections $\frac{\partial}{\partial x^{i}}$ and $dx^{i}$ of $T\widetilde {M}$ and his dual $T^{\ast}\widetilde {M}$ give rise to sections 
of the pulled-back bundles \cite{AA}. In order to keep the notation simple, we also use the symbols
$\frac{\partial}{\partial x^{i}}$ and $dx^{i}$ to denote the basis sections of $\pi^{\ast}T\widetilde {M}$ and $\pi^{\ast}T^{\ast}\widetilde {M}$.

Note that, the pulled-back bundle $\pi^{\ast}TM$ is a vector subbundle of  $\pi^{\ast}T\widetilde {M}|_{TM_{0}}$. In fact for all $(u,v)\in TM_{0}$,
$\pi^{\ast}TM|_{(u,v)}$ is a  vector subspace of $\pi^{\ast}T\widetilde {M}|_{(u,v)}$. Furthermore, the relation 
\begin{equation}
 \frac{\partial}{\partial u^{\alpha}}= B^{i}_{\alpha}\frac{\partial}{\partial x^{i}},
\end{equation}
 between the basis  of $T_{u}M$ and that of $T_{u}\widetilde {M}$, induces, by natural lift, a similar relation between the basis sections $ \frac{\partial}{\partial u^{\alpha}}$ and $\frac{\partial}{\partial x^{i}}$ of $\pi^{\ast}TM$ and $\pi^{\ast}T\widetilde {M}|_{TM_{0}}$, respectively. 
Hence the Riemannian metric $\widetilde {g}$ induces a Riemannian metric $g$ on $\pi^{\ast}TM$. More precisely, $g$ is locally defined by  
\begin{equation}
 g_{\alpha\beta}(u,v)= B_{\alpha}^{i}B^{j}_{\beta}\widetilde {g}_{ij}(x(u),y(u,v)).
\end{equation}
On the other hand, the Finsler structure $\widetilde {F}$ of $\widetilde {M}$ induces on $TM_{0}$ the function $F$ locally given by
\begin{equation}\nonumber
 F(u,v)= \widetilde {F}(x(u), y(u,v))
\end{equation}
 Then by straightforward calculations it follows that $(M,F)$ is a Finsler manifold and the fundamental tensor of $F$ is given by
\begin{equation}
 g_{\alpha\beta}=\frac{1}{2}\frac{\partial^{2}F^{2}}{\partial v^{\alpha}\partial v^{\beta}}.
\end{equation}
Let denote by $\pi^{\ast}TM^{\bot}$ the orthogonal complementary pulled-back bundle of $\pi^{\ast}TM$ in $\pi^{\ast}T\widetilde {M}|_{TM_{0}}$
with respect to $\widetilde {g}$. The bundle $\pi^{\ast}TM^{\bot}$ is called normal pulled-back bundle of Finsler submanifold $(M,F)$ in $(\widetilde {M},\widetilde {F})$. So we have the orthogonal
decomposition 
\begin{equation}\label{ortd}
 \pi^{\ast}T\widetilde {M}|_{TM_{0}}= \pi^{\ast}TM \oplus \pi^{\ast}TM^{\bot}.
\end{equation}
Now, we consider local sections of orthonormal  basis $\{ \mathfrak{N}_{a}= \mathfrak{N}_{a}^{i}\frac{\partial}{\partial x^{i}}\}$ 
of $\pi^{\ast}TM^{\bot}$ with respect to $\widetilde {g}$, where  $\mathfrak{N}^{i}_{a}$ are the functions on $TM_{0}$, satisfying the following relations:
\begin{equation}
 \widetilde {g}_{ij}B^{i}_{\alpha}\mathfrak{N}_{a}^{j}= 0; \quad \forall \alpha \in \{1,\cdots, m\}, \quad 
 a\in \{m+1,\cdots, m+n\},
\end{equation}
and
\begin{equation}
 \widetilde {g}_{ij}\mathfrak{N}^{i}_{a}\mathfrak{N}_{b}^{j}= \delta_{ab}; \quad \forall a,b \in \{m+1,\cdots, m+n\}.
\end{equation}
Denote by $\left[B^{i}_{\alpha} \mathfrak{N}^{i}_{a}\right]$ the transition matrix from $\{\frac{\partial}{\partial x^{i}}\}$ to 
$\{\frac{\partial}{\partial u^{\alpha}}, \mathfrak{N}_{a}\}$ and, $\left[\widetilde {B}^{\alpha}_{i} \widetilde {\mathfrak{N}}^{a}_{i}\right]$ his
inverse. We have 

 \begin{align}
  &\widetilde {B}^{\alpha}_{i} B^{i}_{\beta}  = \delta^{\alpha}_{\beta};\quad \widetilde {B}^{\alpha}_{i} \mathfrak{N}^{i}_{a}= 0, \quad 
  \widetilde {\mathfrak{N}}_{i}^{a}B^{i}_{\alpha}=0,\quad  \widetilde {\mathfrak{N}}_{i}^{a}\mathfrak{N}^{i}_{b}= \delta^{a}_{b},\\\label{rsmj}
 \mbox{and}\;\; &B^{i}_{\alpha} \widetilde {B}^{\alpha}_{j} + \widetilde {\mathfrak{N}}_{j}^{a}\mathfrak{N}^{i}_{a}  = \delta_{j}^{i}.
\end{align}
In the sequel we use the notations: $ B^{ij...}_{\alpha\beta...}= B^{i}_{\alpha}B^{j...}_{\beta...}$ and $B^{i}_{\alpha0}= B^{i}_{\alpha\beta}v^{\beta}$.

\subsection{Induced and Intrinsic Finsler-Ehresmann connection.}
Recall that  the kernel of $\widetilde {\theta}$ represents the Finsler-Ehresmann connection, and  its orthogonal complementary distribution of $VT\widetilde {M}_{0}$ in $TT\widetilde {M}_{0}$ with respect to  the Sasaki-type metric on $T\widetilde {M}_{0}$,  given  by 
\begin{equation}
 \widetilde {G}_{s}= \widetilde {g}_{ij}dx^{i} \otimes dx^{j}+ \widetilde {g}_{ij}\frac{\delta y^{i}}{\widetilde {F}} \otimes \frac{\delta y^{j}}{\widetilde {F}}.
\end{equation}
By $\widetilde {G}_{s}$ we obtain the  induced Finsler-Ehresmann form  $\theta$, defined by 
 $$
 \theta:= \frac{\delta v^{\alpha}}{F}\otimes\frac{\partial}{\partial u^{\alpha}},
 $$
 where $\frac{\delta v^{\alpha}}{F}= \frac{1}{F}(dv^{\alpha}+ N^{\alpha}_{\beta}du^{\beta})$ and $N^{\alpha}_{\beta}$ is related to
$\widetilde {N}^{i}_{j}$ by (see \cite{Ru}, \cite{bl}):
\begin{equation}\label{ehi}
 N^{\alpha}_{\beta}= \widetilde {B}^{\alpha}_{i}(B^{i}_{\beta 0}+ B^{j}_{\beta}\widetilde {N}^{i}_{j}).
\end{equation}
On the other hand, the coefficients $N^{\alpha}_{\beta}$  has an intrinsic analogous $\hat{N}^{\alpha}_{\beta}$, obtained by the spray coefficients of $F$ on $M$, and related to $N^{\alpha}_{\beta}$, by 
\begin{equation}\label{inef}
 \hat{N}^{\alpha}_{\beta}=N^{\alpha}_{\beta} + \frac{A^{\alpha}_{\beta a}}{F}H^{a}_{\lambda}v^{\lambda},
\end{equation}
where the functions $ H^{a}_{\lambda}$ and $A^{\alpha}_{\beta a}$ are given respectively by
\begin{equation}\label{hal}
 H^{a}_{\lambda}= \widetilde {\mathfrak{N}}^{a}_{k}\left(B^{k}_{\lambda 0}+ B^{i}_{\lambda}\widetilde {N}^{k}_{i}\right),\quad \hbox{and}\quad A^{\alpha}_{\beta a}= g^{\alpha \lambda}A_{\lambda\beta a}.
\end{equation}
By (\ref{inef}), we obtain the intrinsic Finsler-Ehresmann $\pi^{\ast}TM$-valued form $\hat{\theta}$ given by:
\begin{equation}
 \hat{\theta}= \frac{1}{F}(dv^{\alpha}+ \hat{N}^{\alpha}_{\beta}du^{\beta})\otimes \frac{\partial}{\partial u^{\alpha}}.
\end{equation}
\begin{proposition}\label{cnl}
 Let $\widetilde {\theta}$ be the Finsler-Ehresmann $\pi^{\ast}T\widetilde {M}$-valued form on $T\widetilde {M}_{0}$. Then the induced Finsler-Ehresmann $\pi^{\ast}TM$-valued form $\theta$ coincides with $\widetilde {\theta}$ on $TM_{0}$. Moreover the intrinsic Finsler-Ehresmann form $\hat{\theta}$, and the induced one $\theta$ are related by
 \begin{equation}
  \hat{\theta}= \theta+ \frac{D}{F^{2}},
 \end{equation}
where $D$ is the $(0,1;1)$-tensor called deformation tensor and given by,
\begin{equation}\label{Deforma}
 D= D^{\alpha}_{\beta} du^{\beta}\otimes \frac{\partial}{\partial u^{\alpha}}= H^{a}_{\lambda}v^{\lambda}A^{\alpha}_{\beta a}du^{\beta}\otimes \frac{\partial}{\partial u^{\alpha}}.
\end{equation}
\end{proposition}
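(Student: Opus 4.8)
The plan is to prove both assertions by a direct local computation: pull the ambient form $\widetilde{\theta}$ back along the immersion $\iota$ and split off its $\pi^{\ast}TM$-component using the orthogonal decomposition (\ref{ortd}). Recall that the ambient Finsler-Ehresmann form reads $\widetilde{\theta}=\frac{\partial}{\partial x^{i}}\otimes\frac{1}{\widetilde{F}}(dy^{i}+\widetilde{N}^{i}_{j}dx^{j})$. On $TM_{0}$ I would use the parametrization $x^{i}=x^{i}(u)$, $y^{i}=B^{i}_{\alpha}v^{\alpha}$ to express the restricted one-forms. Since the $x^{i}$ depend only on $u$, one has $dx^{i}=B^{i}_{\alpha}du^{\alpha}$, while differentiating $y^{i}=B^{i}_{\alpha}v^{\alpha}$ produces $dy^{i}=B^{i}_{\alpha}dv^{\alpha}+B^{i}_{\beta 0}du^{\beta}$, with $B^{i}_{\beta 0}=B^{i}_{\beta\gamma}v^{\gamma}$ in the paper's notation. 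Substituting these together with $\widetilde{F}=F$ on $TM_{0}$, the pulled-back form becomes $\frac{\partial}{\partial x^{i}}\otimes\frac{1}{F}\bigl[B^{i}_{\alpha}dv^{\alpha}+(B^{i}_{\beta 0}+\widetilde{N}^{i}_{j}B^{j}_{\beta})du^{\beta}\bigr]$.

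Next I would decompose the frame section $\frac{\partial}{\partial x^{i}}$ along the splitting (\ref{ortd}) via the inverse transition matrix, namely $\frac{\partial}{\partial x^{i}}=\widetilde{B}^{\alpha}_{i}\frac{\partial}{\partial u^{\alpha}}+\widetilde{\mathfrak{N}}^{a}_{i}\mathfrak{N}_{a}$, which follows at once from (\ref{rsmj}). Projecting onto $\pi^{\ast}TM$ discards the $\mathfrak{N}_{a}$-terms, and contracting with $\widetilde{B}^{\alpha}_{i}B^{i}_{\gamma}=\delta^{\alpha}_{\gamma}$ collapses the $dv^{\alpha}$-term; the surviving coefficient of $du^{\beta}$ is then $\widetilde{B}^{\alpha}_{i}(B^{i}_{\beta 0}+\widetilde{N}^{i}_{j}B^{j}_{\beta})$, which is exactly $N^{\alpha}_{\beta}$ by (\ref{ehi}). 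Hence the tangential part of $\iota^{\ast}\widetilde{\theta}$ equals $\frac{1}{F}(dv^{\alpha}+N^{\alpha}_{\beta}du^{\beta})\otimes\frac{\partial}{\partial u^{\alpha}}=\theta$, establishing the coincidence on $TM_{0}$.

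For the second assertion I would simply subtract the two forms. Both $\hat{\theta}$ and $\theta$ are written on the common frame and differ only through their nonlinear coefficients, so that $\hat{\theta}-\theta=\frac{1}{F}(\hat{N}^{\alpha}_{\beta}-N^{\alpha}_{\beta})\,du^{\beta}\otimes\frac{\partial}{\partial u^{\alpha}}$. Inserting the defining relation (\ref{inef}), i.e. $\hat{N}^{\alpha}_{\beta}-N^{\alpha}_{\beta}=\frac{1}{F}A^{\alpha}_{\beta a}H^{a}_{\lambda}v^{\lambda}$, produces the overall factor $1/F^{2}$ and the coefficient $H^{a}_{\lambda}v^{\lambda}A^{\alpha}_{\beta a}=D^{\alpha}_{\beta}$, yielding $\hat{\theta}=\theta+D/F^{2}$ with $D$ as in (\ref{Deforma}).

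The routine parts are the two substitutions; the only point demanding care is the first statement, where one must be precise about the meaning of ``coincides on $TM_{0}$'': it is the assertion that the $\pi^{\ast}TM$-valued object obtained by restricting $\widetilde{\theta}$ to $TTM_{0}$ and projecting tangentially equals the intrinsically defined $\theta$. The two places to watch are the correct differentiation of $y^{i}=B^{i}_{\alpha}v^{\alpha}$ (the appearance of the second-derivative term $B^{i}_{\beta 0}$, which is precisely what feeds the $N^{\alpha}_{\beta}$ formula) and the clean use of the inverse transition relations (\ref{rsmj}) to annihilate the normal components; once these are in hand, the identification with (\ref{ehi}) is immediate.
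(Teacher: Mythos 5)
Your proof is correct and follows essentially the same route as the paper's own: restrict $\widetilde{\theta}$ to $TM_{0}$ via $dx^{i}=B^{i}_{\alpha}du^{\alpha}$ and $dy^{i}=B^{i}_{\alpha}dv^{\alpha}+B^{i}_{\alpha 0}du^{\alpha}$, identify the $du^{\beta}$-coefficient with $N^{\alpha}_{\beta}$ through (\ref{ehi}) and (\ref{rsmj}), and get the second identity by subtracting $\theta$ from $\hat{\theta}$ and inserting (\ref{inef}). The one difference is to your credit: by projecting explicitly onto $\pi^{\ast}TM$ you correctly dispose of the normal-valued remainder $\frac{1}{F}H^{a}_{\beta}\,du^{\beta}\otimes\mathfrak{N}_{a}$, whereas the paper's intermediate equality $\frac{\delta y^{i}}{\widetilde{F}}=B^{i}_{\beta}\frac{\delta v^{\beta}}{F}$ holds only modulo the term $\frac{1}{F}\mathfrak{N}^{i}_{a}H^{a}_{\beta}\,du^{\beta}$, which is silently dropped there and only implicitly annihilated later via $B^{i}_{\beta}\widetilde{\mathfrak{N}}^{a}_{i}=0$.
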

\begin{proof}
 We have
 \begin{eqnarray}
  \frac{\delta y^{i}}{\widetilde {F}}&=&\frac{1}{\widetilde {F}}(dy^{i}-\widetilde {N}^{i}_{j}dx^{j})\cr
  &=& \frac{1}{\widetilde {F}}\left(B^{i}_{\alpha 0}du^{\alpha}+ B^{i}_{\alpha}dv^{\alpha}+ \widetilde {N}^{i}_{j}B^{j}_{\alpha}du^{\alpha}\right)\cr
  &=& \frac{1}{\widetilde {F}}\left(B^{i}_{\beta}N^{\beta}_{\alpha}du^{\alpha}+ B^{i}_{\beta}dv^{\beta}\right) = B^{i}_{\beta}\frac{\delta v^{\beta}}{F}
 \end{eqnarray}
Elsewhere  $\frac{\partial}{\partial x^{i}}= \widetilde { B}^{\alpha}_{i}\frac{\partial}{\partial u^{\alpha}}+ \widetilde {\mathfrak{N}}^{a}_{i}\mathfrak{N}_{a}$, as basis sections of $\pi^{\ast}T\widetilde {M}|_{TM_{0}}$, it follows
that $\widetilde {\theta}= \theta$ on $TM_{0}$, since $B^{i}_{\beta}\widetilde {\mathfrak{N}}^{a}_{i}=0$. Also, we have  
\begin{align}
           \hat{\theta}&=  \frac{1}{F}(dv^{\alpha} + \hat{N}^{\alpha}_{\beta}du^{\beta})\otimes \frac{\partial}{\partial u^{\alpha}}\cr
           &= \frac{1}{F}\left(dv^{\alpha}+(N^{\alpha}_{\beta} + \frac{A^{\alpha}_{\beta a}}{F}H^{a}_{\lambda}v^{\lambda})du^{\beta} \right)\otimes \frac{\partial}{\partial u^{\alpha}} = \theta+ \frac{D}{F^{2}},
          \end{align}
which completes the proof.
\end{proof}
Note that the corresponding horizontal section $l^{H}$ of the distinguished section $l$ defined in (\ref{DistSect}) is given by $l^{H}= l^{\alpha}\frac{\delta}{\delta u^{\alpha}}$. 
\begin{lemma}\label{nD}
 The action of the deformation $(0,1;1)$-tensor $D$ defined in (\ref{Deforma}) on $l^{H}$ vanishes, that is 
 $
  D(l^{H})= 0.
 $
\end{lemma}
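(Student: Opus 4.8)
The plan is to evaluate $D(l^{H})$ directly from the definitions and then reduce its vanishing to the standard homogeneity property of the Cartan tensor. First I would record that the distinguished section of the induced structure has components $l^{\alpha}=v^{\alpha}/F$, so that $l^{H}=l^{\alpha}\frac{\delta}{\delta u^{\alpha}}$ is a horizontal section. Since the one-form part $du^{\beta}$ of $D$ is dual to the horizontal frame $\frac{\delta}{\delta u^{\alpha}}$, evaluating $D=H^{a}_{\lambda}v^{\lambda}A^{\alpha}_{\beta a}\,du^{\beta}\otimes\frac{\partial}{\partial u^{\alpha}}$ on $l^{H}$ collapses $du^{\beta}$ against $l^{\beta}$, giving
\begin{equation}\nonumber
D(l^{H})=H^{a}_{\lambda}v^{\lambda}A^{\alpha}_{\beta a}\,l^{\beta}\frac{\partial}{\partial u^{\alpha}}=\frac{1}{F}H^{a}_{\lambda}v^{\lambda}\big(A^{\alpha}_{\beta a}v^{\beta}\big)\frac{\partial}{\partial u^{\alpha}}.
\end{equation}
Thus everything hinges on the contraction $A^{\alpha}_{\beta a}v^{\beta}$, and because $A^{\alpha}_{\beta a}=g^{\alpha\lambda}A_{\lambda\beta a}$ it suffices to prove $A_{\lambda\beta a}v^{\beta}=0$.

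Next I would express the mixed component $A_{\lambda\beta a}$ through the ambient Cartan tensor $\widetilde{A}_{ijk}=\frac{\widetilde{F}}{2}\partial\widetilde{g}_{ij}/\partial y^{k}$ by contracting its indices against the adapted frame $B^{i}_{\alpha},\mathfrak{N}^{i}_{a}$, that is $A_{\lambda\beta a}=\widetilde{A}_{ijk}B^{i}_{\lambda}B^{j}_{\beta}\mathfrak{N}^{k}_{a}$. The decisive observation is the identity $y^{j}=B^{j}_{\beta}v^{\beta}$ coming from the tangent map $\iota_{\ast}$, so that contracting the tangential slot $\beta$ with $v^{\beta}$ turns $B^{j}_{\beta}v^{\beta}$ into the ambient vector $y^{j}$:
\begin{equation}\nonumber
A_{\lambda\beta a}v^{\beta}=\widetilde{A}_{ijk}B^{i}_{\lambda}\mathfrak{N}^{k}_{a}\big(B^{j}_{\beta}v^{\beta}\big)=\widetilde{A}_{ijk}B^{i}_{\lambda}\mathfrak{N}^{k}_{a}\,y^{j}.
\end{equation}

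Finally, I would invoke the homogeneity of the fundamental tensor: since $\widetilde{g}_{ij}$ is positive homogeneous of degree $0$ in $y$, Euler's theorem gives $y^{k}\,\partial\widetilde{g}_{ij}/\partial y^{k}=0$, hence $\widetilde{A}_{ijk}y^{k}=0$, and by the total symmetry of the Cartan tensor also $\widetilde{A}_{ijk}y^{j}=0$. Substituting this into the last display yields $A_{\lambda\beta a}v^{\beta}=0$, and therefore $D(l^{H})=0$. The only genuinely delicate step is the middle one: one must pin down the precise definition of the mixed Cartan component $A_{\lambda\beta a}$ in terms of $\widetilde{A}$ and the adapted frame, and check that the tangential slot contracted with $v$ truly reproduces the ambient velocity $y$; once this bookkeeping is fixed, the conclusion is immediate from the degree-zero homogeneity.
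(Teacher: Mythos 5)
Your proof is correct and follows essentially the same route as the paper: the paper's (one-line) proof also proceeds by direct evaluation of $D$ on $l^{H}$ and invokes the vanishing of the Cartan tensor along the distinguished section, which is exactly the identity $\widetilde{A}_{ijk}y^{j}=0$ that you derive from Euler's theorem for the degree-zero homogeneity of $\widetilde{g}_{ij}$. Your write-up simply makes explicit the bookkeeping (the duality $du^{\beta}(\frac{\delta}{\delta u^{\alpha}})=\delta^{\beta}_{\alpha}$, the expression $A_{\lambda\beta a}=\widetilde{A}_{ijk}B^{i}_{\lambda}B^{j}_{\beta}\mathfrak{N}^{k}_{a}$, and the relation $B^{j}_{\beta}v^{\beta}=y^{j}$) that the paper leaves to the reader.
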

\begin{proof}
The proof follows from a direct calculations using the fact that the Cartan tensor $A$ vanishes along the distinguished section $l$.
\end{proof}
Let us denote by $\widetilde {\theta}$ the  Finsler-Ehresmann form  on $VT\widetilde {M}_{0}$. It is easy to see that $\widetilde {\theta}$  is a bundle isomorphism of $VT\widetilde {M}_{0}$ onto $\pi^{\ast}T\widetilde {M}$ (see \cite{AE} for more details). Therefore, we have the following.
\begin{lemma}\label{cnli}
 Let $(M,F)$ be a Finsler submanifold of  $(\widetilde {M}, \widetilde {F})$, $HT\widetilde {M}_{0}|_{TM_{0}}$ be a Finsler-Ehresmann connection, and $\pi^{\ast}TM^{\bot}$ be the normal pulled-back  bundle on $TM_{0}$. Then the induced Finsler-Ehresmann connection $HTM_{0}$ is a vector subbundle of 
 $HT\widetilde {M}_{0}|_{TM_{0}}\oplus \widetilde{\theta}^{-1}(\pi^{\ast}TM^{\bot})$.
\end{lemma}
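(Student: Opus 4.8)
The plan is to argue fibrewise with explicit local frames, pushing the induced horizontal frame on $TM_{0}$ forward into $TT\widetilde{M}_{0}$ by the second tangent map $(\iota_{\ast})_{\ast}$ and reading off its components against the ambient splitting $TT\widetilde{M}_{0}=HT\widetilde{M}_{0}\oplus VT\widetilde{M}_{0}$. Here $HTM_{0}$ is understood via this embedding as a subbundle of $TT\widetilde{M}_{0}|_{TM_{0}}$. Since the induced Finsler-Ehresmann connection is locally spanned by the horizontal lifts $\frac{\delta}{\delta u^{\alpha}}=\frac{\partial}{\partial u^{\alpha}}-N^{\gamma}_{\alpha}\frac{\partial}{\partial v^{\gamma}}$, it suffices to show that each $(\iota_{\ast})_{\ast}\big(\frac{\delta}{\delta u^{\alpha}}\big)$ lies in $HT\widetilde{M}_{0}|_{TM_{0}}\oplus\widetilde{\theta}^{-1}(\pi^{\ast}TM^{\bot})$, a genuine direct sum because its first summand is horizontal while the second sits inside $VT\widetilde{M}_{0}$.

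First I would compute the action of $(\iota_{\ast})_{\ast}$ on the coordinate frame. From the local expression $\iota_{\ast}(u^{\alpha},v^{\alpha})=(x^{i}(u),B^{i}_{\alpha}v^{\alpha})$ one obtains $(\iota_{\ast})_{\ast}\frac{\partial}{\partial u^{\alpha}}=B^{i}_{\alpha}\frac{\partial}{\partial x^{i}}+B^{i}_{\alpha0}\frac{\partial}{\partial y^{i}}$ and $(\iota_{\ast})_{\ast}\frac{\partial}{\partial v^{\gamma}}=B^{i}_{\gamma}\frac{\partial}{\partial y^{i}}$. Substituting these, and then replacing $\frac{\partial}{\partial x^{i}}$ by $\frac{\delta}{\delta x^{i}}+\widetilde{N}^{j}_{i}\frac{\partial}{\partial y^{j}}$, yields a decomposition
\begin{equation*}
(\iota_{\ast})_{\ast}\frac{\delta}{\delta u^{\alpha}}=B^{i}_{\alpha}\frac{\delta}{\delta x^{i}}+C^{i}_{\alpha}\frac{\partial}{\partial y^{i}},\qquad C^{i}_{\alpha}:=B^{j}_{\alpha}\widetilde{N}^{i}_{j}+B^{i}_{\alpha0}-N^{\gamma}_{\alpha}B^{i}_{\gamma}.
\end{equation*}
The first term is a combination of the ambient horizontal lifts, hence lies in $HT\widetilde{M}_{0}|_{TM_{0}}$; the second is vertical, and the whole difficulty is to place it correctly.

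The crux is to identify this vertical term. Applying $\widetilde{\theta}$ and using $\widetilde{\theta}\big((\frac{\partial}{\partial x^{i}})^{V}\big)=\frac{\partial}{\partial x^{i}}$ from Proposition \ref{PropIso} sends $C^{i}_{\alpha}\frac{\partial}{\partial y^{i}}$ to $\frac{1}{\widetilde{F}}C^{i}_{\alpha}\frac{\partial}{\partial x^{i}}$, so I must check that $C^{i}_{\alpha}\frac{\partial}{\partial x^{i}}$ is a normal section, i.e. that its tangential projection $\widetilde{B}^{\delta}_{i}C^{i}_{\alpha}$ vanishes. This is precisely where the defining relation (\ref{ehi}) enters: contracting with $\widetilde{B}^{\delta}_{i}$ and using $\widetilde{B}^{\delta}_{i}B^{i}_{\gamma}=\delta^{\delta}_{\gamma}$ gives $\widetilde{B}^{\delta}_{i}\big(B^{j}_{\alpha}\widetilde{N}^{i}_{j}+B^{i}_{\alpha0}\big)=N^{\delta}_{\alpha}$ by (\ref{ehi}), while $\widetilde{B}^{\delta}_{i}N^{\gamma}_{\alpha}B^{i}_{\gamma}=N^{\delta}_{\alpha}$, so $\widetilde{B}^{\delta}_{i}C^{i}_{\alpha}=0$. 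In view of the splitting $\frac{\partial}{\partial x^{i}}=\widetilde{B}^{\alpha}_{i}\frac{\partial}{\partial u^{\alpha}}+\widetilde{\mathfrak{N}}^{a}_{i}\mathfrak{N}_{a}$ used in the proof of Proposition \ref{cnl}, this forces $C^{i}_{\alpha}\frac{\partial}{\partial x^{i}}\in\pi^{\ast}TM^{\bot}$, whence $C^{i}_{\alpha}\frac{\partial}{\partial y^{i}}\in\widetilde{\theta}^{-1}(\pi^{\ast}TM^{\bot})$.

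I expect the main obstacle to be bookkeeping rather than conceptual: one must carry the index relabelings carefully through the substitution $\frac{\partial}{\partial x^{i}}=\frac{\delta}{\delta x^{i}}+\widetilde{N}^{j}_{i}\frac{\partial}{\partial y^{j}}$ and recognize the combination $B^{j}_{\alpha}\widetilde{N}^{i}_{j}+B^{i}_{\alpha0}$ as exactly the quantity whose $\widetilde{B}$-contraction reproduces $N^{\delta}_{\alpha}$. Once $\widetilde{B}^{\delta}_{i}C^{i}_{\alpha}=0$ is established, the fibrewise containment of $(\iota_{\ast})_{\ast}(HTM_{0})$ in the stated direct sum is immediate, and as both sides are smooth subbundles of $TT\widetilde{M}_{0}|_{TM_{0}}$ the subbundle assertion follows at once.
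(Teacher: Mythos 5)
Your proof is correct, but it follows a genuinely different route from the paper's. The paper argues structurally and metric-theoretically: it decomposes $TT\widetilde{M}_{0}|_{TM_{0}} = HT\widetilde{M}_{0}|_{TM_{0}} \oplus \widetilde{\theta}^{-1}(\pi^{\ast}TM)\oplus \widetilde{\theta}^{-1}(\pi^{\ast}TM^{\bot})$, invokes Proposition \ref{cnl} to identify $\widetilde{\theta}^{-1}(\pi^{\ast}TM)$ with $VTM_{0}$, observes that $HT\widetilde{M}_{0}|_{TM_{0}}\oplus\widetilde{\theta}^{-1}(\pi^{\ast}TM^{\bot})$ is then the orthogonal complement of $VTM_{0}$ in $TT\widetilde{M}_{0}|_{TM_{0}}$ with respect to the Sasaki-type metric, and concludes from the orthogonality of $HTM_{0}$ and $VTM_{0}$. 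You instead verify the containment fibrewise in explicit frames, using only the defining relation (\ref{ehi}) and the transition-matrix identities: your key computation $\widetilde{B}^{\delta}_{i}C^{i}_{\alpha}=0$ is exactly what places the vertical residue of $(\iota_{\ast})_{\ast}\frac{\delta}{\delta u^{\alpha}}$ inside $\widetilde{\theta}^{-1}(\pi^{\ast}TM^{\bot})$. Each approach buys something: the paper's argument is shorter and coordinate-free, but it silently relies on the compatibility of the induced Sasaki metric on $TTM_{0}$ with the restriction of $\widetilde{G}_{s}$ (needed to transfer the orthogonality of $HTM_{0}$ and $VTM_{0}$ into the ambient bundle), a point it never checks; your computation bypasses the metric entirely, resting only on (\ref{ehi}). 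Moreover, contracting your $C^{i}_{\alpha}$ against the completeness relation (\ref{rsmj}) gives $C^{i}_{\alpha}=\mathfrak{N}^{i}_{a}H^{a}_{\alpha}$, so your argument delivers, as a by-product, the explicit decomposition $\frac{\delta}{\delta u^{\alpha}}=B^{i}_{\alpha}\frac{\delta}{\delta x^{i}}+H^{a}_{\alpha}\mathfrak{N}^{i}_{a}\frac{\partial}{\partial y^{i}}$, which (up to the $F$-normalization of vertical lifts) is precisely the formula the paper has to re-derive later in the proof of Lemma \ref{Lans}; with your route that Landsberg computation becomes immediate.
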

\begin{proof}
Let $\widetilde {\theta}^{-1}$ the inverse of $\widetilde {\theta}$, we have 
 \begin{eqnarray}
  TT\widetilde {M}_{0}|_{TM_{0}} &=& HT\widetilde {M}_{0}|_{TM_{0}} \oplus VT\widetilde {M}_{0}|_{TM_{0}}\cr
  &=& HT\widetilde {M}_{0}|_{TM_{0}} \oplus \widetilde {\theta}^{-1}(\pi^{\ast}T\widetilde {M}|_{TM_{0}})\cr
  &=& HT\widetilde {M}_{0}|_{TM_{0}} \oplus \widetilde {\theta}^{-1}(\pi^{\ast}TM) \oplus \widetilde {\theta}^{-1}(\pi^{\ast}TM^{\bot})
 \end{eqnarray}
 Now, using the Proposition \ref{cnl} we have $\widetilde {\theta}^{-1}(\pi^{\ast}TM)= \theta^{-1}(\pi^{\ast}TM)= VTM_{0}$. It follows that
 $HT\widetilde {M}_{0}|_{TM_{0}} \oplus\widetilde {\theta}^{-1}(\pi^{\ast}TM^{\bot})$ is the orthogonal complementary vector bundle to $VTM_{0}$, in 
 $ TT\widetilde {M}_{0}|_{TM_{0}}$, and $HTM_{0}$ is orthogonal to $VTM_{0}$. We deduce that $HTM_{0}$ is a vector subbundle of 
 $HT\widetilde {M}_{0}|_{TM_{0}} \oplus\widetilde {\theta}^{-1}(\pi^{\ast}TM^{\bot})$, hence the result.
\end{proof}
\begin{definition}{\rm
 Let $HTM_{0}$ be a Finsler-Ehresmann connection on the Finsler manifold $(M,F)$, and $\pi^{\ast}TM$ the pulled-back bundle over the slit tangent bundle  $TM_{0}$. The pullback Finsler connection  is the pair $(HTM_{0},\nabla)$, where $\nabla$ is a linear connection on $\pi^{\ast}TM$.}
\end{definition}
Considering  the induced Finsler-Ehresmann connection on $\widetilde {M}$, we proceed with the study of the geometric objects induced by $\widetilde {\nabla}$ on $\pi^{\ast}TM$. Then according to the orthogonal decomposition (\ref{ortd}), the Gauss and Weingarten formulas are given by 
\begin{eqnarray}\label{gss}
 \widetilde {\nabla}_{X}\xi&=& \nabla_{X}\xi + \mathcal{S}(\xi, X),\\
 \label{wgt}
 \widetilde {\nabla}_{X}\eta &=&-\mathcal{A}_{\eta}X + \nabla^{\bot}_{X}\eta,
 \end{eqnarray}
where, $X \in \Gamma(TTM_{0})$, $\xi \in \Gamma(\pi^{\ast}TM)$, $\eta \in \Gamma( \pi^{\ast}TM^{\bot}) $, $\mathcal{S}(\xi,X) \in \Gamma(\pi^{\ast}TM^{\bot})$
and $\mathcal{A}_{\eta}X \in \Gamma(\pi^{\ast}TM)$. It is easy to check that, $\nabla$ and $\nabla^{\bot}$ are respectively the  linear 
connections on $\pi^{\ast}TM$ and $\pi^{\ast}TM^{\bot}$. Thus, with the induced Finsler-Ehresmann connection $HTM_{0}$, we can define a pullback Finsler connections on $\pi^{\ast}TM$ and $\pi^{\ast}TM^{\bot}$ which derive from $\widetilde{\nabla}$ and will called respectively the induced pullback Finsler connection and the induced normal pullback Finsler connection. Note that for any $\eta\in \Gamma(\pi^{\ast}TM^{\bot})$,  we call $\mathcal{A}_{\eta}$ and $\mathcal{S}$ the  shape operator and the second fundamental form, respectively,  and these are  the Finslerian tensors of type $(0,1;1)$ and $(1,1;1)$, respectively.

Now for any $X\in \Gamma(TTM_{0})$ we define the differential operator, 
\begin{eqnarray}
 \overline{\nabla}_{X}: \Gamma(\pi^{\ast}T\widetilde{M}|_{TM_{0}})&\longrightarrow & \Gamma(\pi^{\ast}T\widetilde{M}|_{TM_{0}})\cr
 \widetilde{\xi}    &\longmapsto&  \overline{\nabla}_{X}\widetilde{\xi}:= \widetilde{\nabla}_{X}\widetilde{\xi}. 
\end{eqnarray}

Clearly, $\overline{\nabla}$ is a linear connection on $\pi^{\ast}T\widetilde{M}|_{TM_{0}}$. So $(HTM_{0}, \overline{\nabla})$ defines the restriction of the pullback Finsler connection $\widetilde{\nabla}$ on $\pi^{\ast}T\widetilde{M}|_{TM_{0}}$. 

We now consider 
the local coefficients of $\widetilde{\nabla}$, $\nabla$ and $\nabla^{\bot}$ given respectively by
\begin{eqnarray}
 \widetilde{\nabla}_{\frac{\delta}{\delta x^{j}}}\frac{\partial}{\partial x^{i}}= \widetilde{\Gamma}_{i j}^{k}\frac{\partial}{\partial x^{k}},
 \quad \widetilde{\nabla}_{F\frac{\partial}{\partial y^{j}}}\frac{\partial}{\partial x^{i}} = \widetilde{\gamma}_{i j}^{k}\frac{\partial}{\partial x^{k}}.\\
 \nabla_{\frac{\delta}{\delta u^{\beta}}}\frac{\partial}{\partial u^{\alpha}}= \Gamma_{\alpha \beta}^{\lambda}\frac{\partial}{\partial u^{\lambda}},\quad \nabla_{F\frac{\partial}{\partial v^{\beta}}}\frac{\partial}{\partial u^{\alpha}}= \gamma_{\alpha \beta}^{\lambda}\frac{\partial}{\partial u^{\lambda}}.
\end{eqnarray}
and 
\begin{equation}
  \nabla^{\bot}_{\frac{\delta}{\delta u^{\alpha}}}\mathfrak{N}_{a}= \Gamma_{a \alpha }^{b}\mathfrak{N}_{b},\quad \nabla^{\bot}_{F\frac{\partial}{\partial v^{\alpha}}}\mathfrak{N}_{a}= \gamma_{a \alpha }^{b}\mathfrak{N}_{b},
\end{equation}
where  $\widetilde{\Gamma}_{i j}^{k}$ and $\widetilde{\gamma}_{i j}^{k}$ are the ``Christoffel symbols'' with respect $\frac{\delta}{\delta x^{i}}$ and $\widetilde{F}\frac{\partial}{\partial y^{i}}$, respectively and given by:
\begin{eqnarray}
 \widetilde{\Gamma}_{i j}^{k}&=& \frac{\widetilde{g}^{is}}{2}\left(\frac{\delta \widetilde{g}_{sj}}{\delta x^{k}}-\frac{\delta \widetilde{g}_{jk}}{\delta x^{s}}+ \frac{\delta \widetilde{g}_{ks}}{\delta x^{j}}\right), \\ \hbox{and}\quad
 \widetilde{\gamma}_{i j}^{k}&=& \frac{\widetilde{F}}{2}\widetilde{g}^{ks}\frac{\partial \widetilde{g}_{ij}}{\partial y^{j}}.
\end{eqnarray}

 We also define locally, the horizontal and vertical part of second fundamental form respectively by 
\begin{equation}
 \mathcal{S}(\frac{\delta}{\delta u^{\alpha}}, \frac{\partial}{\partial u^{\beta}})=\stackrel{h}{\mathcal{S}^{a}_{\alpha \beta}}\mathfrak{N}_{a} \quad \hbox{and}\quad
 \mathcal{S}(F\frac{\partial}{\partial u^{\alpha}}, \frac{\partial}{\partial u^{\beta}})=\stackrel{v}{\mathcal{S}^{a}_{\alpha \beta}}\mathfrak{N}_{a}.
\end{equation}

Likewise,  the horizontal and vertical part of shape operator with respect to the normal section $\mathfrak{N}_{a}$
are given respectively  by 

\begin{equation}
\mathcal{A}(\mathfrak{N}_{a},\frac{\delta}{\delta u^{\alpha}}) = \stackrel{h}{ \mathcal{A}^{\alpha}_{a\beta}}\frac{\partial}{\partial u^{\alpha}}\quad \hbox{and}\quad
\mathcal{A}(\mathfrak{N}_{a},F\frac{\partial}{\partial v^{\alpha}}) = \stackrel{v}{\mathcal{A}_{a \beta}^{\alpha}}\frac{\partial}{\partial u^{\alpha}}.
\end{equation}

By the lemma \ref{cnli}, it is easy to check that the local coefficients of $\overline{\nabla}$ and $\widetilde{\nabla}$ are related by:
\begin{eqnarray}\label{cih}
\overline{\Gamma}^{i}_{j \alpha}& =& B^{k}_{\alpha}\widetilde{\Gamma}^{i}_{j k}+ H^{a}_{\alpha}\mathfrak{N}^{k}_{a}\widetilde{\gamma}^{i}_{jk},\\
\label{civ}
\overline{\gamma}^{i}_{j \alpha} &=& \widetilde{\gamma}^{i}_{jk}B^{k}_{\alpha}.
\end{eqnarray}
where $H^{a}_{\alpha}$ is given by (\ref{hal}). 

 The local coefficients of the induced pullback Finsler connection $\nabla$ are given in terms of the local coefficients of $\widetilde{\nabla}$ by \cite{bl}:
 \begin{eqnarray}\label{coef}
  \Gamma_{\alpha \beta}^{\lambda}&=& \widetilde{B}_{i}^{\lambda}\left(B^{i}_{\alpha \beta}+ \widetilde{\Gamma}^{i}_{jk}B^{jk}_{\alpha \beta} + \widetilde{\gamma}_{jk}^{i}B^{j}_{\alpha}H^{a}_{\beta}\mathfrak{N}_{a}^{k}\right),\\
 \label{coefv} \gamma_{\alpha \beta}^{\lambda} &=& \widetilde{B}_{i}^{\lambda}\widetilde{\gamma}_{jk}^{i}B^{jk}_{\alpha \beta}.
 \end{eqnarray}
Likewise, the local coefficients of $\nabla^{\bot}$ are given in function of the local coefficients of $\widetilde{\nabla}$ by:
\begin{eqnarray}\label{coefnh}
 \Gamma_{a \alpha}^{b} &=& \left(\frac{\delta B^{k}_{a}}{\delta u^{\alpha}} + B^{j}_{a}B^{k}_{\alpha}\widetilde{\Gamma}^{i}_{jk}+ B^{j}_{a}H^{a}_{\alpha}\mathfrak{N}_{a}^{k}\widetilde{\gamma}_{jk}^{i}\right)\widetilde{\mathfrak{N}}^{b}_{i},\\
 \label{coefnv}
 \gamma_{a \alpha}^{b} &=& \left(F\frac{\partial B^{k}_{a}}{\partial v^{\alpha}}+ B^{j}_{a}\widetilde{\gamma}^{i}_{jk}B^{k}_{\alpha}\right)\widetilde{\mathfrak{N}}^{b}_{i}.
\end{eqnarray}
Furthermore, the local components of the horizontal and vertical part of second fundamental form are given respectively by 
\begin{eqnarray}\label{coefsfh}
 \stackrel{h}{\mathcal{S}^{a}_{\alpha \beta}}&=& \widetilde{\mathfrak{N}}^{a}_{i}\left(B^{i}_{\alpha \beta}+ \widetilde{\Gamma}^{i}_{jk}B^{jk}_{\alpha \beta} + \widetilde{\gamma}_{jk}^{i}B^{j}_{\alpha}H^{a}_{\beta}\mathfrak{N}_{a}^{k}\right),\\ \hbox{and} \quad
 \label{coefsfv} \stackrel{v}{\mathcal{S}^{a}_{\alpha \beta}} &=& \widetilde{\mathfrak{N}}^{a}_{i}\widetilde{\gamma}_{jk}^{i}B^{jk}_{\alpha \beta}.
\end{eqnarray}
Finally the local components of the  horizontal and vertical part of shape operator are given respectively by:
\begin{eqnarray}\label{coefsoh}
 \stackrel{h}{\mathcal{A}^{\lambda}_{a\alpha}}&=& -\left(\frac{\delta B^{k}_{a}}{\delta u^{\alpha}} + B^{j}_{a}B^{k}_{\alpha}\widetilde{\Gamma}^{i}_{jk}+ B^{j}_{a}H^{a}_{\alpha}\mathfrak{N}_{a}^{k}\widetilde{\gamma}_{jk}^{i}\right)\widetilde{B}^{\lambda}_{i},\\
 \hbox{and}\quad \label{coefsov}
  \stackrel{v}{\mathcal{A}^{\lambda}_{a\alpha}}&=& -\left(F\frac{\partial B^{k}_{a}}{\partial v^{\alpha}}+ B^{j}_{a}\widetilde{\gamma}^{i}_{jk}B^{k}_{\alpha}\right)\widetilde{B}^{\lambda}_{i}.  
\end{eqnarray}

\section{Induced and intrinsic Hashiguchi connection}\label{InduIntri}

As an application of the general theory of pullback Finsler connection developed in the previous section, we consider in the following ambient manifold $(\widetilde{M},\widetilde{F})$ endowed with \textit{Hashiguchi connection}, and study the induced one in $(M,F)$.

Now denote by $ \stackrel{H}{\widetilde{\nabla}}$ the Hashiguchi connection on the pullback bundle $\pi^{\ast}T\widetilde{M}$, given  locally by:
\begin{eqnarray}
 \stackrel{H}{\widetilde{\nabla}}_{\frac{\delta}{\delta x^{i}}}\frac{\partial}{\partial x^{j}}&=& \widetilde{\mathfrak{H}}_{ij}^{k}\frac{\partial}{\partial x^{k}},\\
 \stackrel{H}{\widetilde{\nabla}}_{F\frac{\partial}{\partial y^{i}}}\frac{\partial}{\partial x^{j}}&=& \widetilde{\mathfrak{h}}_{ij}^{k}\frac{\partial}{\partial x^{k}}.
\end{eqnarray}

Recall that $\widetilde{\mathfrak{H}}^{k}_{ij}$ and $\widetilde{\mathfrak{h}}^{k}_{ij}$ are given, respectively, by\cite{mb1}:
\begin{eqnarray}
 \widetilde{\mathfrak{H}}^{k}_{ij}&=& \widetilde{\Gamma}^{k}_{ij} + \widetilde{L}^{k}_{ij}, \\
\hbox{and}\;\;\;\widetilde{\mathfrak{h}}^{k}_{ij}&=& \widetilde{\gamma}^{k}_{ij},
\end{eqnarray}
where $ \widetilde{L}^{k}_{ij}$ are the coefficients of Landsberg tensor with respect to $\widetilde{g}$ (see\cite{AF} for more details).
\begin{lemma}\label{Lans}
 Let $(\widetilde{M}, \widetilde{F})$ be a Finsler manifold and $\widetilde{L}$ be the Landsberg tensor with respect to the pullback bundle $\pi^{\ast}T\widetilde{M}$ on $(\widetilde{M},\widetilde{F})$. Then the components of the restriction $\overline{L}$ of $\widetilde{L}$ to $\pi^{\ast}T\widetilde{M}|_{TM_{0}}$ are given locally by
 \begin{equation}
  \overline{L}_{ij\alpha}= B^{k}_{\alpha}\widetilde{L}_{ijk}-H^{a}_{\alpha}\mathfrak{N}^{k}_{a}\widetilde{\gamma}_{ijk}.\nonumber
 \end{equation}
\end{lemma}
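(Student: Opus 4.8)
The plan is to realise $\widetilde{L}$ as a genuine $\pi^{\ast}T\widetilde{M}$-valued tensor that arises as the difference of two of the classical pullback Finsler connections, and then to restrict it direction-slot by direction-slot using the frame decomposition already packaged in (\ref{cih}). The point is that, among the four connections, the Berwald connection has local coefficients $(\widetilde{\mathfrak{H}}^{k}_{ij},0)=(\widetilde{\Gamma}^{k}_{ij}+\widetilde{L}^{k}_{ij},0)$ while the Cartan connection has $(\widetilde{\Gamma}^{k}_{ij},\widetilde{\mathfrak{h}}^{k}_{ij})=(\widetilde{\Gamma}^{k}_{ij},\widetilde{\gamma}^{k}_{ij})$. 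Hence their difference is a tensor whose horizontal evaluation is the Landsberg tensor $\widetilde{L}_{ijk}$ and whose vertical evaluation is $-\widetilde{\gamma}_{ijk}$ (minus the Cartan tensor), the vanishing of the Berwald vertical coefficient being responsible for the sign. This is precisely the object whose restriction we must compute, and it already explains why the Cartan term $\widetilde{\gamma}$ has to appear in the statement.

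Next I would invoke that restriction commutes with forming a connection difference. Because the induced connection on $\pi^{\ast}T\widetilde{M}|_{TM_{0}}$ is defined by $\overline{\nabla}_{X}\widetilde{\xi}=\widetilde{\nabla}_{\iota_{\ast}X}\widetilde{\xi}$, the restriction $\overline{L}$ is exactly the difference of the restricted Berwald and restricted Cartan connections, so it suffices to restrict each connection and subtract. The geometric input here is the decomposition of the submanifold's horizontal frame underlying (\ref{cih}), namely $\iota_{\ast}\bigl(\tfrac{\delta}{\delta u^{\alpha}}\bigr)=B^{k}_{\alpha}\tfrac{\delta}{\delta x^{k}}+H^{a}_{\alpha}\mathfrak{N}^{k}_{a}\,F\tfrac{\partial}{\partial y^{k}}$: the horizontal lift on $M$ is \emph{not} purely horizontal inside $T\widetilde{M}_{0}$, but leaks a vertical component measured by $H^{a}_{\alpha}\mathfrak{N}^{k}_{a}$.

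Applying (\ref{cih}) to each of the two connections then produces the two restricted horizontal coefficients
\[
\overline{\mathfrak{B}}^{i}_{j\alpha}=B^{k}_{\alpha}\bigl(\widetilde{\Gamma}^{i}_{jk}+\widetilde{L}^{i}_{jk}\bigr),\qquad
\overline{\Gamma}^{i}_{j\alpha}=B^{k}_{\alpha}\widetilde{\Gamma}^{i}_{jk}+H^{a}_{\alpha}\mathfrak{N}^{k}_{a}\widetilde{\gamma}^{i}_{jk},
\]
where $\overline{\mathfrak{B}}$ denotes the restricted Berwald connection and $\overline{\Gamma}$ the restricted Cartan connection, and where the first carries no $H^{a}_{\alpha}$-term precisely because the Berwald vertical coefficient vanishes. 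Subtracting gives $\overline{L}^{i}_{j\alpha}=B^{k}_{\alpha}\widetilde{L}^{i}_{jk}-H^{a}_{\alpha}\mathfrak{N}^{k}_{a}\widetilde{\gamma}^{i}_{jk}$, and lowering the index $i$ with $\widetilde{g}|_{TM_{0}}$ yields $\overline{L}_{ij\alpha}=B^{k}_{\alpha}\widetilde{L}_{ijk}-H^{a}_{\alpha}\mathfrak{N}^{k}_{a}\widetilde{\gamma}_{ijk}$, which is the claim.

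The main obstacle I anticipate is conceptual rather than computational. One must recognise that the naive restriction, which would evaluate $\widetilde{L}$ only on the horizontal part $B^{k}_{\alpha}\tfrac{\delta}{\delta x^{k}}$ and return merely $B^{k}_{\alpha}\widetilde{L}_{ijk}$, is incomplete: the correct tensorial restriction also registers the vertical leakage of the horizontal lift and thereby produces the correction $-H^{a}_{\alpha}\mathfrak{N}^{k}_{a}\widetilde{\gamma}_{ijk}$. Pinning down the sign is the delicate part, since it forces the Berwald--Cartan pairing; the alternative pairings Berwald--Chern or Hashiguchi--Cartan both have vanishing vertical difference and would kill the $\widetilde{\gamma}$-term, while Hashiguchi--Chern would flip its sign. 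Once the right pair is fixed, the remainder is a direct substitution into (\ref{cih}).
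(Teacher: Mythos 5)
Your computation is correct and reaches the stated formula, but by a genuinely different route from the paper. The paper argues directly from the definition $\widetilde{L}_{ijk}=-\tfrac{1}{2}\widetilde{g}_{ij;k}$ (Shen's convention, the horizontal covariant derivative taken with the Berwald-type horizontal coefficients): it decomposes the submanifold horizontal frame as $\tfrac{\delta}{\delta u^{\alpha}}=B^{k}_{\alpha}\tfrac{\delta}{\delta x^{k}}+H^{a}_{\alpha}\mathfrak{N}^{k}_{a}F\tfrac{\partial}{\partial y^{k}}$ --- exactly the vertical leakage you invoke --- and evaluates $\overline{L}_{ij\alpha}=-\tfrac{1}{2}\widetilde{g}_{ij;\alpha}$ along it; the horizontal component gives $B^{k}_{\alpha}\widetilde{L}_{ijk}$, while the vertical derivative of $\widetilde{g}_{ij}$, which equals $2\widetilde{\gamma}_{ijk}$, gives $-H^{a}_{\alpha}\mathfrak{N}^{k}_{a}\widetilde{\gamma}_{ijk}$. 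You instead realise the extended Landsberg object as the connection difference $(\mathrm{Berwald})-(\mathrm{Cartan})$, use that restriction commutes with differences, and push each connection through (\ref{cih}). What your route buys: no differentiation of the metric at all, only the coefficient tables of the classical connections plus the already-established restriction formula; it also makes visible why the $\widetilde{\gamma}$-correction produced by Lemma \ref{Lans} cancels the Cartan leakage term when the lemma is fed into Theorem \ref{coefih}. What the paper's route buys: the extension of $\widetilde{L}$ to vertical directions is forced by the definition rather than selected.

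That last point is where your write-up needs one repair. You fix the Berwald--Cartan pairing on the grounds that "the sign forces it", which is reverse engineering: as you yourself observe, the pairings Hashiguchi$-$Cartan, Berwald$-$Chern and Hashiguchi$-$Chern extend the same horizontal data $\widetilde{L}_{ijk}$ with different vertical data, so the pairing cannot legitimately be chosen by the answer it produces. The principled selection is this: the Cartan connection is fully metric-compatible (both horizontally and vertically), so for every direction $X$, horizontal or vertical, one has $\widetilde{g}\bigl(\stackrel{B}{\nabla}_{X}\xi-\stackrel{C}{\nabla}_{X}\xi,\eta\bigr)=-\tfrac{1}{2}\bigl(\stackrel{B}{\nabla}_{X}\widetilde{g}\bigr)(\xi,\eta)$, where $\stackrel{B}{\nabla}$ and $\stackrel{C}{\nabla}$ denote the Berwald and Cartan connections and one uses the symmetry of $\widetilde{L}_{ijk}$ and $\widetilde{\gamma}_{ijk}$ in $i,j$. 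Hence your difference tensor is exactly the paper's object $-\tfrac{1}{2}\widetilde{g}_{ij;\bullet}$ with the direction slot allowed to be arbitrary; that identity, not the sign of the target formula, is what singles out the Berwald--Cartan pairing. With this observation inserted, your argument is complete.
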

\begin{proof}
Recall that the coefficients of Landsberg tensor $\widetilde{L}_{ijk}$ are given by the horizontal covariant derivatives of $\widetilde{g}_{ij}$ denoted 
$\widetilde{g}_{ij;k}$, more precisely  $\widetilde{L}_{ijk}:=-\frac{1}{2}\widetilde{g}_{ij;k}$ (see \cite{AF}). By lemma \ref{cnli} we  have $\frac{\delta}{\delta u^{\alpha}}\in HT\widetilde {M}_{0}|_{TM_{0}}\oplus \widetilde{\theta}^{-1}(\pi^{\ast}TM^{\bot})$, and denoting that the vertical correspondent of $\mathfrak{N}_{a}$ by $\stackrel{V}{\mathfrak{N}_{a}}:= \widetilde{\theta}^{-1}(\mathfrak{N}_{a})$, and using (\ref{ehi}) and (\ref{rsmj}), we obtain 
\begin{eqnarray}
\frac{\delta}{\delta u^{\alpha}}&=& B^{i}_{\alpha}\frac{\delta}{\delta u^{i}} + \widetilde{\mathfrak{N}}_{i}^{a}(B^{i}_{\alpha 0}+ B^{j}_{\alpha}\widetilde{N}^{i}_{j})\stackrel{V}{\mathfrak{N}_{a}}\cr 
&= &B^{i}_{\alpha}\frac{\delta}{\delta u^{i}} + H^{a}_{\alpha}\stackrel{V}{\mathfrak{N}_{a}},\nonumber
\end{eqnarray} 
where $H^{a}_{\alpha}= \widetilde{\mathfrak{N}}_{i}^{a}(B^{i}_{\alpha 0}+ B^{j}_{\alpha}\widetilde{N}^{i}_{j})$. Hence, one has
\begin{eqnarray}
 \overline{L}_{ij\alpha}&=& -\frac{1}{2}\widetilde{g}_{ij;\alpha} = -\frac{1}{2} B^{k}_{\alpha}\widetilde{g}_{ij;k} -\frac{1}{2} \mathfrak{N}_{a}^{k}H^{a}_{\alpha}\widetilde{\gamma}_{ijk} \cr
 &=& B^{k}_{\alpha}\widetilde{L}_{ijk}- \mathfrak{N}_{a}^{k}H^{a}_{\alpha}\widetilde{\gamma}_{ijk},\nonumber
 \end{eqnarray} 
 as required. 
\end{proof}
\begin{theorem}\label{coefih}
 Let $(\widetilde{M},\widetilde{F})$ be a  $(m+n)$-dimensional Finsler manifold endowed with Hashiguchi connection $\stackrel{H}{\widetilde{\nabla}}$ and $(M,F)$ be a $m$-dimensional Finsler submanifold  of $(\widetilde{M},\widetilde{F})$. Then the local coefficients of the induced Hashiguchi connection are given by the following formulas:
 \begin{eqnarray}\label{coefH}
  \mathfrak{H}^{\lambda}_{\alpha \beta}&=& \left(B^{k}_{\alpha \beta}+ B^{ij}_{\alpha\beta}\widetilde{\mathfrak{H}}^{k}_{ij}\right)\widetilde{B}^{\lambda}_{k},\\
   \hbox{and} \quad
  \label{coefh}\mathfrak{h}^{\lambda}_{\alpha \beta} &=& B^{ij}_{\alpha\beta}\widetilde{\mathfrak{h}}^{k}_{ij}\widetilde{B}^{\lambda}_{k}.
 \end{eqnarray}
 \end{theorem}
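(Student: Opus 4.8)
\noindent
The plan is to obtain both families of coefficients by reading off the tangential component of the restricted Hashiguchi connection through the Gauss formula (\ref{gss}). Writing $\stackrel{H}{\overline{\nabla}}$ for the restriction of $\stackrel{H}{\widetilde{\nabla}}$ to $\pi^{\ast}T\widetilde{M}|_{TM_{0}}$, I would define $\mathfrak{H}^{\lambda}_{\alpha\beta}\frac{\partial}{\partial u^{\lambda}}$ as the $\pi^{\ast}TM$-part of $\stackrel{H}{\widetilde{\nabla}}_{\frac{\delta}{\delta u^{\beta}}}\frac{\partial}{\partial u^{\alpha}}$ and $\mathfrak{h}^{\lambda}_{\alpha\beta}\frac{\partial}{\partial u^{\lambda}}$ as that of $\stackrel{H}{\widetilde{\nabla}}_{F\frac{\partial}{\partial v^{\beta}}}\frac{\partial}{\partial u^{\alpha}}$. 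The three ingredients are: the identification $\frac{\partial}{\partial u^{\alpha}}=B^{i}_{\alpha}\frac{\partial}{\partial x^{i}}$ of a tangential section inside $\pi^{\ast}T\widetilde{M}|_{TM_{0}}$, the Leibniz rule for $\stackrel{H}{\widetilde{\nabla}}$, and the orthogonal splitting $\frac{\partial}{\partial x^{k}}=\widetilde{B}^{\lambda}_{k}\frac{\partial}{\partial u^{\lambda}}+\widetilde{\mathfrak{N}}^{a}_{k}\mathfrak{N}_{a}$, so that contracting with $\widetilde{B}^{\lambda}_{k}$ extracts precisely the tangential coefficient.

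I would settle the vertical coefficient first, as it is the transparent case. The vertical lift on $M$ satisfies $F\frac{\partial}{\partial v^{\beta}}=B^{j}_{\beta}\,\widetilde{F}\frac{\partial}{\partial y^{j}}$, and since each $B^{i}_{\alpha}$ is a function of the base coordinates $u$ alone, the term $\widetilde{F}\frac{\partial}{\partial y^{j}}B^{i}_{\alpha}$ vanishes. Hence $\stackrel{H}{\widetilde{\nabla}}_{F\frac{\partial}{\partial v^{\beta}}}\frac{\partial}{\partial u^{\alpha}}=B^{ij}_{\alpha\beta}\widetilde{\mathfrak{h}}^{k}_{ij}\frac{\partial}{\partial x^{k}}$, using the symmetry of $\widetilde{\mathfrak{h}}=\widetilde{\gamma}$ in its lower indices, and contracting with $\widetilde{B}^{\lambda}_{k}$ yields (\ref{coefh}) immediately.

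The horizontal coefficient is the substantive step and the one I expect to be the main obstacle. Here the pushed-forward horizontal lift is not purely horizontal in the ambient bundle: by Lemma \ref{cnli} and the decomposition computed in the proof of Lemma \ref{Lans} one has $\frac{\delta}{\delta u^{\beta}}=B^{j}_{\beta}\frac{\delta}{\delta x^{j}}+H^{a}_{\beta}\stackrel{V}{\mathfrak{N}_{a}}$, where $\stackrel{V}{\mathfrak{N}_{a}}=\mathfrak{N}^{k}_{a}\,\widetilde{F}\frac{\partial}{\partial y^{k}}$ lies in $\widetilde{\theta}^{-1}(\pi^{\ast}TM^{\bot})$. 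Differentiating along the genuinely horizontal part $B^{j}_{\beta}\frac{\delta}{\delta x^{j}}$ produces the derivative term $B^{k}_{\alpha\beta}\frac{\partial}{\partial x^{k}}$ (the $y$-derivative in $\frac{\delta}{\delta x^{j}}$ annihilating $B^{i}_{\alpha}$) together with $B^{ij}_{\alpha\beta}\widetilde{\mathfrak{H}}^{k}_{ij}\frac{\partial}{\partial x^{k}}$, whose tangential projection is exactly the right-hand side $(B^{k}_{\alpha\beta}+B^{ij}_{\alpha\beta}\widetilde{\mathfrak{H}}^{k}_{ij})\widetilde{B}^{\lambda}_{k}$ of (\ref{coefH}). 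The delicate point is the contribution coming from the normal-vertical part $H^{a}_{\beta}\stackrel{V}{\mathfrak{N}_{a}}$: it feeds the Cartan-type coefficient $\widetilde{\mathfrak{h}}^{k}_{ij}=\widetilde{\gamma}^{k}_{ij}$ into the computation, and to recover the clean formula (\ref{coefH}) one must verify that its tangential projection drops out of $\mathfrak{H}^{\lambda}_{\alpha\beta}$ and is instead absorbed into the normal component, i.e. the second fundamental form (\ref{coefsfh}). I anticipate that establishing this is the heart of the proof and that it relies on the relations (\ref{hal}) linking $H^{a}_{\beta}$, $\mathfrak{N}^{k}_{a}$ and the Cartan data, together with the Landsberg restriction of Lemma \ref{Lans}.

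Finally I would assemble the two computations. Substituting $\widetilde{\mathfrak{H}}=\widetilde{\Gamma}+\widetilde{L}$ and $\widetilde{\mathfrak{h}}=\widetilde{\gamma}$ exhibits (\ref{coefH})--(\ref{coefh}) as the induced Chern connection corrected by a Landsberg-type term $\widetilde{B}^{\lambda}_{k}B^{ij}_{\alpha\beta}\widetilde{L}^{k}_{ij}$, which serves as a consistency check against the ambient relation $\stackrel{H}{\widetilde{\nabla}}_{\frac{\delta}{\delta x^{i}}}\frac{\partial}{\partial x^{j}}=(\widetilde{\Gamma}^{k}_{ij}+\widetilde{L}^{k}_{ij})\frac{\partial}{\partial x^{k}}$ and closes the argument.
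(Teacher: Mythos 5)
Your vertical computation is fine and agrees with the paper (it is just the analogue of (\ref{coefv}), since $\widetilde{\mathfrak{h}}=\widetilde{\gamma}$ and the $B^{i}_{\alpha}$ are independent of $v$). The gap is exactly where you yourself locate it, in the horizontal coefficient, and the mechanism you anticipate for closing it is not the correct one. Expanding along $\frac{\delta}{\delta u^{\beta}}=B^{j}_{\beta}\frac{\delta}{\delta x^{j}}+H^{a}_{\beta}\mathfrak{N}^{j}_{a}\widetilde{F}\frac{\partial}{\partial y^{j}}$ produces the extra term $B^{i}_{\alpha}H^{a}_{\beta}\mathfrak{N}^{j}_{a}\widetilde{\gamma}^{k}_{ij}\frac{\partial}{\partial x^{k}}$, and you propose to dispose of it by showing that its tangential projection vanishes, i.e.\ that it is ``absorbed into the second fundamental form.'' That cannot work: the tangential part $\widetilde{B}^{\lambda}_{k}B^{i}_{\alpha}H^{a}_{\beta}\mathfrak{N}^{j}_{a}\widetilde{\gamma}^{k}_{ij}$ is (up to raising an index with $\widetilde{g}$) the Cartan tensor evaluated on two tangential sections and one normal section, which has no reason to vanish for a general Finsler submanifold. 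The paper's own general formulas confirm this: in (\ref{coef}) and (\ref{coefsfh}) the Cartan-type term $\widetilde{\gamma}^{i}_{jk}B^{j}_{\alpha}H^{a}_{\beta}\mathfrak{N}^{k}_{a}$ appears in \emph{both} the tangential and the normal components; and in the Hashiguchi case the paper's theorem on the second fundamental form, $\stackrel{h}{\mathfrak{S}^{a}_{\alpha \beta}}=\bigl(B^{k}_{\alpha \beta}+ B^{ij}_{\alpha\beta}\widetilde{\mathfrak{H}}^{k}_{i j}\bigr)\widetilde{\mathfrak{N}}^{a}_{k}$, shows the term absent from the normal component as well. So it neither vanishes tangentially nor migrates to the normal part: it must cancel outright.

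The cancellation is the entire content of the paper's proof, and it runs through Lemma \ref{Lans} in an essential way that your sketch never exploits. One splits $\widetilde{\mathfrak{H}}^{k}_{ij}=\widetilde{\Gamma}^{k}_{ij}+\widetilde{L}^{k}_{ij}$ and restricts the two pieces separately. The Chern part restricts by (\ref{cih}) to $B^{j}_{\beta}\widetilde{\Gamma}^{k}_{ij}+H^{a}_{\beta}\mathfrak{N}^{j}_{a}\widetilde{\gamma}^{k}_{ij}$. The Landsberg part, being $\widetilde{L}_{ijk}=-\frac{1}{2}\widetilde{g}_{ij;k}$, is restricted as a tensor by differentiating $\widetilde{g}_{ij}$ along the induced horizontal field $\frac{\delta}{\delta u^{\beta}}$; since the vertical part of that field differentiates the metric into a Cartan term, Lemma \ref{Lans} gives $\overline{L}^{k}_{i\beta}=B^{j}_{\beta}\widetilde{L}^{k}_{ij}-H^{a}_{\beta}\mathfrak{N}^{j}_{a}\widetilde{\gamma}^{k}_{ij}$, with a minus sign. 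Adding the two restrictions, the $H^{a}_{\beta}$-terms cancel and one obtains the clean identity $\overline{\mathfrak{H}}^{k}_{i\beta}=B^{j}_{\beta}\widetilde{\mathfrak{H}}^{k}_{ij}$, which substituted into (\ref{coef}) yields (\ref{coefH}) in one line. You do cite Lemma \ref{Lans} at the end as an auxiliary ingredient, but your argument never uses its minus sign, which is the single fact that makes the theorem true; without it, your direct Gauss-formula computation terminates with an extra tangential Cartan term that the stated formula (\ref{coefH}) does not contain, and no projection argument will remove it.
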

 \begin{proof}
  By lemma \ref{Lans} we obtain
$ 
\overline{\mathfrak{H}}^{k}_{i\beta}= B^{j}_{\beta}\widetilde{\mathfrak{H}}^{k}_{ij}.
$ 
Writing (\ref{coef}) for the Hashiguchi connection and replacing  $\overline{\mathfrak{H}}^{k}_{i\beta}$ by its value we obtain the relation (\ref{coefH}). One has (\ref{coefh}) in the similar way the relation (\ref{coefv}).
\end{proof}
 \begin{theorem}
  Let $(M,F)$ be a $m$-dimensional Finsler submanifold  of $(\widetilde{M},\widetilde{F})$ endowed with Hashiguchi connection $\stackrel{H}{\widetilde{\nabla}}$.
  Locally, the horizontal part $\stackrel{h}{\mathfrak{S}^{a}_{\alpha \beta}}$ and vertical part $\stackrel{v}{\mathfrak{S}^{a}_{\alpha \beta}}$  of  second fundamental form  are given by:
 \begin{equation}
  \stackrel{h}{\mathfrak{S}^{a}_{\alpha \beta}}  =   \left(B^{k}_{\alpha \beta}+ B^{ij}_{\alpha\beta}\widetilde{\mathfrak{H}}^{k}_{i j}\right)\widetilde{\mathfrak{N}}^{a}_{k} \;\;
  \hbox{and}\;\;\stackrel{v}{\mathfrak{S}^{a}_{\alpha \beta}}= B^{ij}_{\alpha\beta}\widetilde{\mathfrak{h}}^{k}_{ij}\widetilde{\mathfrak{N}}^{a}_{k},
 \end{equation}
 respectively.
\end{theorem}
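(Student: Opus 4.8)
The plan is to read off the second fundamental form as the \emph{normal} component of exactly the same ambient covariant derivatives whose \emph{tangential} components produced the induced connection in Theorem \ref{coefih}. By the Gauss formula (\ref{gss}) for the Hashiguchi connection $\stackrel{H}{\widetilde{\nabla}}$, each ambient derivative $\stackrel{H}{\widetilde{\nabla}}_{X}\frac{\partial}{\partial u^{\beta}}$ splits into a tangential part in $\pi^{\ast}TM$, giving the induced Hashiguchi coefficients, plus a normal part in $\pi^{\ast}TM^{\bot}$, giving $\stackrel{h}{\mathfrak{S}}$ or $\stackrel{v}{\mathfrak{S}}$. So I would carry out the single computations of $\stackrel{H}{\widetilde{\nabla}}_{\frac{\delta}{\delta u^{\alpha}}}\frac{\partial}{\partial u^{\beta}}$ and $\stackrel{H}{\widetilde{\nabla}}_{F\frac{\partial}{\partial v^{\alpha}}}\frac{\partial}{\partial u^{\beta}}$ in the ambient basis $\frac{\partial}{\partial x^{k}}$, and then project onto the normal directions via the decomposition $\frac{\partial}{\partial x^{k}}=\widetilde{B}^{\lambda}_{k}\frac{\partial}{\partial u^{\lambda}}+\widetilde{\mathfrak{N}}^{a}_{k}\mathfrak{N}_{a}$ arising from (\ref{rsmj}); the $\widetilde{\mathfrak{N}}^{a}_{k}$-coefficient is precisely the normal projector that converts the tangential formulas of Theorem \ref{coefih} into the claimed normal ones.

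For the horizontal part I would write $\frac{\partial}{\partial u^{\beta}}=B^{i}_{\beta}\frac{\partial}{\partial x^{i}}$ and apply the Leibniz rule. Since $B^{i}_{\beta}$ depends only on $u$, the horizontal derivative $\frac{\delta}{\delta u^{\alpha}}B^{i}_{\beta}$ reduces to $B^{i}_{\alpha\beta}$, while the term $B^{i}_{\beta}\stackrel{H}{\widetilde{\nabla}}_{\frac{\delta}{\delta u^{\alpha}}}\frac{\partial}{\partial x^{i}}$ is governed by the relation $\overline{\mathfrak{H}}^{k}_{i\alpha}=B^{j}_{\alpha}\widetilde{\mathfrak{H}}^{k}_{ij}$ already established in the proof of Theorem \ref{coefih}. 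Using the symmetry of $\widetilde{\mathfrak{H}}^{k}_{ij}$ in its lower indices this yields $\stackrel{H}{\widetilde{\nabla}}_{\frac{\delta}{\delta u^{\alpha}}}\frac{\partial}{\partial u^{\beta}}=\left(B^{k}_{\alpha\beta}+B^{ij}_{\alpha\beta}\widetilde{\mathfrak{H}}^{k}_{ij}\right)\frac{\partial}{\partial x^{k}}$. Contracting the right-hand side with $\widetilde{\mathfrak{N}}^{a}_{k}$ extracts the component along $\mathfrak{N}_{a}$, which is exactly $\stackrel{h}{\mathfrak{S}^{a}_{\alpha\beta}}$, giving the first claimed formula. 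The complementary contraction with $\widetilde{B}^{\lambda}_{k}$ recovers $\mathfrak{H}^{\lambda}_{\alpha\beta}$ of Theorem \ref{coefih}, confirming that nothing new is required in the tangential direction.

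For the vertical part I would proceed identically but differentiate along $F\frac{\partial}{\partial v^{\alpha}}$, which lifts to $B^{k}_{\alpha}\widetilde{F}\frac{\partial}{\partial y^{k}}$ since $y^{i}=B^{i}_{\alpha}v^{\alpha}$. Now the vertical derivative of $B^{i}_{\beta}$ vanishes, being a function of $u$ alone, so only the connection term survives; using $\stackrel{H}{\widetilde{\nabla}}_{\widetilde{F}\frac{\partial}{\partial y^{k}}}\frac{\partial}{\partial x^{i}}=\widetilde{\mathfrak{h}}^{j}_{ki}\frac{\partial}{\partial x^{j}}$ together with the symmetry of $\widetilde{\mathfrak{h}}=\widetilde{\gamma}$ gives $\stackrel{H}{\widetilde{\nabla}}_{F\frac{\partial}{\partial v^{\alpha}}}\frac{\partial}{\partial u^{\beta}}=B^{ij}_{\alpha\beta}\widetilde{\mathfrak{h}}^{k}_{ij}\frac{\partial}{\partial x^{k}}$. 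Projecting with $\widetilde{\mathfrak{N}}^{a}_{k}$ then delivers $\stackrel{v}{\mathfrak{S}^{a}_{\alpha\beta}}=B^{ij}_{\alpha\beta}\widetilde{\mathfrak{h}}^{k}_{ij}\widetilde{\mathfrak{N}}^{a}_{k}$, which is the vertical fundamental form (\ref{coefsfv}) with $\widetilde{\gamma}$ written as $\widetilde{\mathfrak{h}}$.

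The only genuinely delicate point lies in the horizontal part, where the naive restriction of the Hashiguchi coefficients carries an extra term $H^{a}_{\alpha}\mathfrak{N}^{k}_{a}\widetilde{\gamma}^{i}_{jk}$; its disappearance is what produces the clean relation $\overline{\mathfrak{H}}^{k}_{i\alpha}=B^{j}_{\alpha}\widetilde{\mathfrak{H}}^{k}_{ij}$. This cancellation is precisely the content of Lemma \ref{Lans}: the restriction of the Landsberg tensor contributes $-H^{a}_{\alpha}\mathfrak{N}^{k}_{a}\widetilde{\gamma}_{ijk}$, which annihilates the offending term. Since this was already secured in the proof of Theorem \ref{coefih}, the present statement reduces to the two routine projections above; the vertical part needs no such care, as the Hashiguchi and Cartan connections share their vertical coefficients $\widetilde{\mathfrak{h}}=\widetilde{\gamma}$.
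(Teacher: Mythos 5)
Your proposal is correct and takes essentially the same route as the paper: the paper's one-line proof simply specializes the general second-fundamental-form formulas (\ref{coefsfh}) and (\ref{coefsfv}) to the Hashiguchi connection, using the relation $\overline{\mathfrak{H}}^{k}_{i\beta}=B^{j}_{\beta}\widetilde{\mathfrak{H}}^{k}_{ij}$ from Lemma \ref{Lans}, exactly as you do. Your Gauss-formula computation is just an unpacked re-derivation of those cited formulas, and your identification of the Landsberg cancellation as the only delicate point matches the reasoning the paper leaves implicit via Theorem \ref{coefih}.
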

\begin{proof}
 The proof follows from the relations (\ref{coefsfh}) and (\ref{coefsfv}).
\end{proof}
Now we consider the normal Hashiguchi connection $\stackrel{H}{\nabla^{\bot}}$ and set 
\begin{equation}
 \stackrel{H}{\nabla^{\bot}}_{\frac{\delta}{\delta u^{\alpha}}}\mathfrak{N}_{a}= \mathfrak{H}^{b}_{a\alpha}\mathfrak{N}_{b}\quad \hbox{and}\quad
  \stackrel{H}{\nabla^{\bot}}_{F\frac{\partial}{\partial v^{\alpha}}}\mathfrak{N}_{a}= \mathfrak{h}^{b}_{a\alpha}\mathfrak{N}_{b}.
\end{equation}
\begin{theorem}
  Let  $(M,F)$ be a $m$-dimensional Finsler submanifold  of Finsler manifold $(\widetilde{M},\widetilde{F})$, endowed with Hashiguchi connection. Then the local coefficients of the normal Hashiguchi connection on $(\pi^{\ast}TM)^{\bot}$, are given by the following formulas:
  \begin{eqnarray}
    \mathfrak{H}^{b}_{a\alpha} &=&  \left(\frac{\delta B^{k}_{a}}{\delta u^{\alpha}} + B^{j}_{a}B^{k}_{\alpha}\widetilde{\mathfrak{H}}^{i}_{jk}\right)\widetilde{\mathfrak{N}}^{b}_{i},\\
    \hbox{and} \quad 
    \mathfrak{h}^{b}_{a\alpha} &=& \left(F\frac{\partial B^{k}_{a}}{\partial v^{\alpha}}+ B^{j}_{a}B^{k}_{\alpha}\widetilde{\mathfrak{h}}^{i}_{jk}\right)\widetilde{\mathfrak{N}}^{b}_{i}.    
  \end{eqnarray}
  Moreover the local coefficients of the horizontal and vertical part of shape operator are given respectively by:
  \begin{eqnarray}
   \stackrel{h}{\mathfrak{A}^{\beta}_{a \alpha}}&=&- \left(\frac{\delta B^{k}_{a}}{\delta u^{\alpha}} + B^{j}_{a}B^{k}_{\alpha}\widetilde{\mathfrak{H}}^{i}_{jk}\right)\widetilde{B}^{\beta}_{i},\\
    \hbox{and} \quad 
     \stackrel{v}{\mathfrak{A}^{\beta}_{a \alpha}}&=&-\left(F\frac{\partial B^{k}_{a}}{\partial v^{\alpha}}+ B^{j}_{a}B^{k}_{\alpha}\widetilde{\mathfrak{h}}^{i}_{jk}\right)\widetilde{B}^{\beta}_{i}.
  \end{eqnarray} 
\end{theorem}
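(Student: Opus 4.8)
The plan is to run the Gauss--Weingarten machinery for the Hashiguchi connection $\stackrel{H}{\widetilde{\nabla}}$ exactly as in the derivation of the induced normal pullback connection (\ref{coefnh})--(\ref{coefnv}) and of the shape operator (\ref{coefsoh})--(\ref{coefsov}), replacing the coefficients $\widetilde{\Gamma}$, $\widetilde{\gamma}$ by the Hashiguchi coefficients $\widetilde{\mathfrak{H}}$, $\widetilde{\mathfrak{h}}$. Writing the Weingarten formula (\ref{wgt}) for $\stackrel{H}{\widetilde{\nabla}}$ as $\stackrel{H}{\widetilde{\nabla}}_{X}\mathfrak{N}_{a}=-\stackrel{H}{\mathfrak{A}}_{\mathfrak{N}_{a}}X+\stackrel{H}{\nabla^{\bot}}_{X}\mathfrak{N}_{a}$, I would expand $\mathfrak{N}_{a}=B^{k}_{a}\frac{\partial}{\partial x^{k}}$ and use the Leibniz rule, so that everything reduces to computing $\stackrel{H}{\widetilde{\nabla}}_{X}\frac{\partial}{\partial x^{k}}$ for $X=\frac{\delta}{\delta u^{\alpha}}$ and $X=F\frac{\partial}{\partial v^{\alpha}}$, together with the derivatives $\frac{\delta B^{k}_{a}}{\delta u^{\alpha}}$ and $F\frac{\partial B^{k}_{a}}{\partial v^{\alpha}}$ coming from the Leibniz term.

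For the horizontal direction the crucial ingredient is the restriction identity $\overline{\mathfrak{H}}^{k}_{j\alpha}=B^{l}_{\alpha}\widetilde{\mathfrak{H}}^{k}_{jl}$ already established in the proof of Theorem \ref{coefih}. This is exactly where Lemma \ref{cnli} and Lemma \ref{Lans} interact: the decomposition $\frac{\delta}{\delta u^{\alpha}}=B^{i}_{\alpha}\frac{\delta}{\delta x^{i}}+H^{a}_{\alpha}\stackrel{V}{\mathfrak{N}_{a}}$ produces, besides $B^{l}_{\alpha}\widetilde{\mathfrak{H}}^{k}_{jl}$, a spurious vertical term $H^{a}_{\alpha}\mathfrak{N}^{l}_{a}\widetilde{\gamma}^{k}_{jl}$, and the Landsberg correction carried inside $\widetilde{\mathfrak{H}}=\widetilde{\Gamma}+\widetilde{L}$ cancels it precisely by Lemma \ref{Lans}. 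Granting this identity, one gets $\stackrel{H}{\widetilde{\nabla}}_{\frac{\delta}{\delta u^{\alpha}}}\mathfrak{N}_{a}=\left(\frac{\delta B^{k}_{a}}{\delta u^{\alpha}}+B^{j}_{a}B^{l}_{\alpha}\widetilde{\mathfrak{H}}^{k}_{jl}\right)\frac{\partial}{\partial x^{k}}$, and projecting this $\pi^{\ast}T\widetilde{M}|_{TM_{0}}$-valued expression onto its tangential and normal components through the inverse transition matrix $[\widetilde{B}^{\beta}_{i}\,\widetilde{\mathfrak{N}}^{b}_{i}]$, using the relations (\ref{rsmj}), returns $\stackrel{h}{\mathfrak{A}^{\beta}_{a\alpha}}$ (with the minus sign of (\ref{wgt})) and $\mathfrak{H}^{b}_{a\alpha}$, respectively.

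The vertical direction is simpler, since $F\frac{\partial}{\partial v^{\alpha}}=B^{l}_{\alpha}\widetilde{F}\frac{\partial}{\partial y^{l}}$ is purely vertical in the ambient bundle and needs no splitting; with $\widetilde{\mathfrak{h}}^{k}_{jl}=\widetilde{\gamma}^{k}_{jl}$ one obtains directly $\stackrel{H}{\widetilde{\nabla}}_{F\frac{\partial}{\partial v^{\alpha}}}\mathfrak{N}_{a}=\left(F\frac{\partial B^{k}_{a}}{\partial v^{\alpha}}+B^{j}_{a}B^{l}_{\alpha}\widetilde{\mathfrak{h}}^{k}_{jl}\right)\frac{\partial}{\partial x^{k}}$, and the same tangential/normal projection gives $\stackrel{v}{\mathfrak{A}^{\beta}_{a\alpha}}$ and $\mathfrak{h}^{b}_{a\alpha}$. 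The main (indeed the only) real obstacle is the horizontal cancellation; since it is already packaged in Theorem \ref{coefih} and Lemma \ref{Lans}, I would simply invoke those results and then carry out the two projections in parallel, the remaining work being routine Leibniz expansions and contractions with the orthonormality relations (\ref{rsmj}).
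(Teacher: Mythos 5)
Your proposal is correct and takes essentially the same route as the paper: the paper's proof consists precisely of writing the general relations (\ref{coefnh}), (\ref{coefnv}), (\ref{coefsoh}), (\ref{coefsov}) for the Hashiguchi connection and substituting the restriction identity $\overline{\mathfrak{H}}^{k}_{i\beta}=B^{j}_{\beta}\widetilde{\mathfrak{H}}^{k}_{ij}$ obtained from Lemma \ref{Lans} in Theorem \ref{coefih}, which is exactly your key step. Your unfolding of the Weingarten--Leibniz derivation and the tangential/normal projections via $[\widetilde{B}^{\beta}_{i}\,\widetilde{\mathfrak{N}}^{b}_{i}]$ is simply the implicit content behind those cited general formulas, not a different argument.
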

\begin{proof}
 Writing the relations (\ref{coefnh}), (\ref{coefnv}), (\ref{coefsoh}) and (\ref{coefsov}) for the Hashiguchi connection and using the relation $
\overline{\mathfrak{H}}^{k}_{i\beta}= B^{j}_{\beta}\widetilde{\mathfrak{H}}^{k}_{ij}$,
  the result follows.
\end{proof}
In the previous paragraph, we constructed the induced Hashiguchi connection, whose local coefficients are given by (\ref{coefH}) and (\ref{coefh}). On the other hand, on Finsler submanifold lives the intrinsic Hashiguchi connection $\stackrel{H}{\nabla^{\ast}}$. More precisely, the canonical Finsler-Ehresmann connection is locally spanned by the vector fields
\begin{equation}  \label{dins}                                                                                                                                                                                                                                                                                                                                                                        
        \frac{\delta^{\ast}}{\delta^{\ast}u^{\alpha}}= \frac{\partial}{\partial u^{\alpha}}-  \hat{N}^{\lambda}_{\alpha}\frac{\partial}{\partial v^{\lambda}}                                                                                                                                                                                                                                                                                                                                                                  \end{equation}
Then using (\ref{inef}) and taking into account (\ref{dins}) we derive that
\begin{equation}\label{reiei}
    \frac{\delta^{\ast}}{\delta^{\ast}u^{\alpha}}= \frac{\delta}{\delta u^{\alpha}}- D^{\lambda}_{\alpha}\frac{\partial}{\partial v^{\lambda}},
\end{equation}
where $D^{\lambda}_{\alpha}$ are the coefficients of deformation tensor $D$. 
Elsewhere the Hashiguchi connection is given by \cite{mb1}
\begin{align}\label{ht3}
  & 2g(\stackrel{H}{\nabla}_{X}\pi_{\ast}Y, \pi_{\ast}Z) =  X.g(\pi_{\ast}Y,\pi_{\ast}Z) + Y.g(\pi_{\ast}Z,\pi_{\ast}X)
   - Z.g(\pi_{\ast}X,\pi_{\ast}Y)\cr
   &  + g(\pi_{\ast}[X,Y], \pi_{\ast}Z) -g(\pi_{\ast}[Y,Z], \pi_{\ast}X)+ g(\pi_{\ast}[Z,X], \pi_{\ast}Y)\cr 
   &  + 2A(\theta (Z),\pi_{\ast}X, \pi_{\ast}Y)-2A(\theta(Y),\pi_{\ast}X,\pi_{\ast}Z) +2L(\pi_{\ast}X,\pi_{\ast}Y,\pi_{\ast}Z),
 \end{align} 
then replacing $X,Y$ and $Z$ from (\ref{ht3}) by $\frac{\delta}{\delta u^{\alpha}}$, $\frac{\delta}{\delta u^{\beta}}$ and $\frac{\delta}{\delta u^{\lambda}}$ respectively, we obtain
\begin{equation}\label{rcici}
 2g_{\mu \lambda}\mathfrak{H}^{\mu}_{\alpha \beta}= \frac{\delta}{\delta u^{\alpha}}g_{\beta \lambda} + \frac{\delta}{\delta u^{\beta}}g_{\alpha \lambda}- \frac{\delta}{\delta u^{\lambda}}g_{\alpha\beta} + 2 L_{\alpha \beta \lambda}.
\end{equation}
\begin{theorem}
 Let $(\widetilde{M},\widetilde{F})$ be a $(m+n)$-dimensional Finsler manifold endowed with the Hashiguchi connection $\stackrel{H}{\widetilde{\nabla}}$ and $(M,F)$ be $m$-dimensional Finsler submanifold of $(\widetilde{M},\widetilde{F})$. Then the local coefficients of the induced Hashiguchi connection $(\mathfrak{H}^{\mu}_{\alpha \beta},\mathfrak{h}^{\mu}_{\alpha \beta})$ are related to the  local coefficients of intrinsic Hashiguchi connection $(\mathfrak{H}^{\ast \mu}_{\alpha \beta},\mathfrak{h}^{\ast\mu}_{\alpha \beta})$, by the following relations:
 \begin{eqnarray}\label{inds}
 \mathfrak{H}^{\mu}_{\alpha \beta}&=&  \mathfrak{H}^{\ast \mu}_{\alpha \beta}+ D^{\tau}_{\alpha}\widetilde{B}^{\mu}_{k}\widetilde{\mathfrak{h}}^{k}_{ij}B^{ij}_{\tau\beta}+ D^{\tau}_{\beta}\widetilde{B}^{\mu}_{k}\widetilde{\mathfrak{h}}^{k}_{ij}B^{ij}_{\tau\alpha}- D^{\mu}_{\epsilon}B^{ij}_{\alpha \beta}\widetilde{\mathfrak{h}}_{ij}^{k}\widetilde{B}^{\epsilon}_{k},\\
 \mathfrak{h}^{\mu}_{\alpha \beta}&=&\mathfrak{h}^{\ast\mu}_{\alpha \beta} = B^{ij}_{\alpha\beta}\widetilde{\mathfrak{h}}^{k}_{ij}\widetilde{B}^{\mu}_{k}.
 \end{eqnarray}
\end{theorem}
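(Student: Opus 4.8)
The plan is to obtain both the induced and the intrinsic Hashiguchi connections from the Koszul-type identity \eqref{ht3} and to compare them by subtraction, the only difference between the two situations being the horizontal frame that is fed into \eqref{ht3}. First I would record the two resulting expressions for the horizontal coefficients. Feeding $X,Y,Z=\frac{\delta}{\delta u^{\alpha}},\frac{\delta}{\delta u^{\beta}},\frac{\delta}{\delta u^{\lambda}}$ into \eqref{ht3} produces \eqref{rcici} for the induced connection, while feeding the intrinsic fields $\frac{\delta^{\ast}}{\delta^{\ast}u^{\alpha}},\dots$ gives the analogous identity
\[
2g_{\mu\lambda}\mathfrak{H}^{\ast\mu}_{\alpha\beta}=\frac{\delta^{\ast}}{\delta^{\ast}u^{\alpha}}g_{\beta\lambda}+\frac{\delta^{\ast}}{\delta^{\ast}u^{\beta}}g_{\alpha\lambda}-\frac{\delta^{\ast}}{\delta^{\ast}u^{\lambda}}g_{\alpha\beta}+2L_{\alpha\beta\lambda}.
\]
In both computations the Cartan terms $A(\theta(\cdot),\cdot,\cdot)$ of \eqref{ht3} drop out, since $\theta$ (resp.\ $\hat{\theta}$) annihilates the corresponding horizontal fields by Proposition \ref{PropIso}, and the bracket terms drop out because the brackets of the horizontal fields are vertical and hence killed by $\pi_{\ast}$. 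This is what leaves, in each case, only the metric derivatives together with a Landsberg term.

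Next I would subtract the two identities. By \eqref{reiei} one has $\frac{\delta}{\delta u^{\alpha}}-\frac{\delta^{\ast}}{\delta^{\ast}u^{\alpha}}=D^{\tau}_{\alpha}\frac{\partial}{\partial v^{\tau}}$, so each metric-derivative difference becomes $D^{\tau}_{\alpha}\frac{\partial g_{\beta\lambda}}{\partial v^{\tau}}$ together with its two permutations. The Landsberg terms cancel, and the crucial input here is Lemma \ref{nD}: the identity $D(l^{H})=0$ together with \eqref{inef} forces $\hat{N}^{\alpha}_{\beta}v^{\beta}=N^{\alpha}_{\beta}v^{\beta}$, so the induced and intrinsic geodesic sprays coincide; since the Landsberg tensor is a spray invariant, the very same $L_{\alpha\beta\lambda}$ occurs in both identities and disappears upon subtraction. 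One is then left with
\[
2g_{\mu\lambda}\bigl(\mathfrak{H}^{\mu}_{\alpha\beta}-\mathfrak{H}^{\ast\mu}_{\alpha\beta}\bigr)=D^{\tau}_{\alpha}\frac{\partial g_{\beta\lambda}}{\partial v^{\tau}}+D^{\tau}_{\beta}\frac{\partial g_{\alpha\lambda}}{\partial v^{\tau}}-D^{\tau}_{\lambda}\frac{\partial g_{\alpha\beta}}{\partial v^{\tau}}.
\]

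Then I would convert this into the stated shape. Using $\frac{\partial g_{\beta\lambda}}{\partial v^{\tau}}=\frac{2}{F}A_{\beta\lambda\tau}$, the total symmetry of $A$, the fact that the vertical Hashiguchi coefficient is the Cartan coefficient (so that $g_{\mu\lambda}\mathfrak{h}^{\mu}_{\tau\beta}=\frac{1}{F}A_{\lambda\tau\beta}$), and the symmetry $g_{\mu\lambda}D^{\mu}_{\epsilon}g^{\epsilon\sigma}=D^{\sigma}_{\lambda}$ that follows from $D_{\lambda\epsilon}=H^{a}_{\kappa}v^{\kappa}A_{\lambda\epsilon a}$ being symmetric, the right-hand side rewrites as $2g_{\mu\lambda}\bigl(D^{\tau}_{\alpha}\mathfrak{h}^{\mu}_{\tau\beta}+D^{\tau}_{\beta}\mathfrak{h}^{\mu}_{\tau\alpha}-D^{\mu}_{\epsilon}\mathfrak{h}^{\epsilon}_{\alpha\beta}\bigr)$. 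Cancelling $2g_{\mu\lambda}$ and inserting $\mathfrak{h}^{\mu}_{\tau\beta}=\widetilde{B}^{\mu}_{k}\widetilde{\mathfrak{h}}^{k}_{ij}B^{ij}_{\tau\beta}$ from \eqref{coefh} gives exactly \eqref{inds}. For the vertical coefficients I would argue that $\mathfrak{h}^{\mu}_{\alpha\beta}$ is the Cartan coefficient, determined by $g$ and the canonical vertical derivatives alone and therefore insensitive to the choice of horizontal distribution; hence $\mathfrak{h}^{\mu}_{\alpha\beta}=\mathfrak{h}^{\ast\mu}_{\alpha\beta}$, both equal to $B^{ij}_{\alpha\beta}\widetilde{\mathfrak{h}}^{k}_{ij}\widetilde{B}^{\mu}_{k}$ by \eqref{coefh}.

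The main obstacle is precisely the cancellation of the Landsberg contributions: one must be certain that the Landsberg terms appearing in the two Koszul identities are literally the same tensor, and not merely tensors of the same name computed with different horizontal data. This is exactly what Lemma \ref{nD} secures, by making the induced and intrinsic sprays coincide; once this is granted, the remainder of the argument is the bookkeeping with $\frac{\partial g}{\partial v}$, the Cartan tensor and $\mathfrak{h}$ indicated above.
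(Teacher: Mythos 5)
Your proposal is correct and is essentially the paper's own argument written out in full: the paper's proof consists precisely of contracting the Koszul identity (\ref{rcici}) with $g^{\mu\lambda}$ and invoking the frame relation (\ref{reiei}) together with the expression (\ref{coefh}) for the vertical coefficients, which is exactly what your subtraction of the induced and intrinsic Koszul identities, conversion of the vertical derivatives of $g$ into Cartan terms, and final contraction accomplish. Your explicit justification that the two Landsberg terms coincide --- via Lemma \ref{nD}, which forces the induced and intrinsic sprays to agree --- spells out a cancellation the paper uses silently, so it is a useful addition rather than a deviation.
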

\begin{proof}
 Contracting (\ref{rcici}) by $g^{\mu \lambda}$, and using (\ref{reiei}) and (\ref{coefh}), the result follows.
\end{proof}
\begin{theorem}
  Let $(\widetilde{M},\widetilde{F})$ be a $(m+n)$-dimensional Finsler manifold endowed with the Hashiguchi connection $\stackrel{H}{\widetilde{\nabla}}$ and $(M,F)$ be $m$-dimensional Finsler submanifold of $(\widetilde{M},\widetilde{F})$, Then the induced Hashiguchi connection coincides with the intrinsic Hashiguchi connection on submanifold $M$ if and only if the deformation $(0,1;1)$-tensor $D$ vanishes on $TM_{0}$. Moreover, the covariant differentiation in the direction of the horizontal correspondent of distinguished section $l^{H}$, is the same for both Hashiguchi connections on the submanifold.
\end{theorem}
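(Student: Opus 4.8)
The plan is to use the fact that a Hashiguchi connection, being a pulled-back bundle Finsler connection, is a \emph{pair} consisting of a Finsler-Ehresmann connection together with a linear connection on $\pi^{\ast}TM$. Consequently the induced Hashiguchi connection $\stackrel{H}{\nabla}$ and the intrinsic one $\stackrel{H}{\nabla^{\ast}}$ coincide precisely when both their Finsler-Ehresmann parts and their linear-connection coefficients agree. This splits the problem into comparing the two horizontal frames $\frac{\delta}{\delta u^{\alpha}}$ and $\frac{\delta^{\ast}}{\delta^{\ast}u^{\alpha}}$ on the one hand, and the two families of coefficients $(\mathfrak{H}^{\mu}_{\alpha\beta},\mathfrak{h}^{\mu}_{\alpha\beta})$ and $(\mathfrak{H}^{\ast\mu}_{\alpha\beta},\mathfrak{h}^{\ast\mu}_{\alpha\beta})$ on the other.

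First I would read off from (\ref{reiei}) that the induced and intrinsic horizontal frames differ exactly by the deformation term, $\frac{\delta^{\ast}}{\delta^{\ast}u^{\alpha}}=\frac{\delta}{\delta u^{\alpha}}-D^{\lambda}_{\alpha}\frac{\partial}{\partial v^{\lambda}}$. Since $\{\frac{\partial}{\partial v^{\lambda}}\}$ is linearly independent, the two horizontal distributions coincide if and only if $D^{\lambda}_{\alpha}=0$ for all $\alpha,\lambda$, that is, if and only if $D\equiv 0$ on $TM_{0}$. For the backward implication I would then substitute $D=0$ into the relation (\ref{inds}) of the preceding theorem: every correction term there carries a factor $D$, so it collapses to $\mathfrak{H}^{\mu}_{\alpha\beta}=\mathfrak{H}^{\ast\mu}_{\alpha\beta}$, while $\mathfrak{h}^{\mu}_{\alpha\beta}=\mathfrak{h}^{\ast\mu}_{\alpha\beta}$ holds unconditionally; together with the coincidence of the horizontal frames this yields that the two Hashiguchi connections agree.

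The forward implication is the delicate step, and the main point is to resist trying to invert (\ref{inds}) at the level of the linear coefficients alone: the three correction terms are $D$ contracted against the tangential Cartan-type coefficients $\gamma^{\mu}_{\tau\beta}=\mathfrak{h}^{\mu}_{\tau\beta}$, and their simultaneous vanishing does not force $D=0$ wherever those coefficients degenerate. Instead I would argue through the Finsler-Ehresmann part of the structure: if $\stackrel{H}{\nabla}$ and $\stackrel{H}{\nabla^{\ast}}$ coincide, then in particular their underlying Finsler-Ehresmann connections coincide, so the two horizontal frames span the same distribution, and by the computation of the previous paragraph this forces $D\equiv 0$ on $TM_{0}$. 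This closes the equivalence.

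For the final assertion I would first note, using Lemma \ref{nD}, that $l^{\ast H}=l^{H}-(D^{\lambda}_{\alpha}l^{\alpha})\frac{\partial}{\partial v^{\lambda}}=l^{H}$, so the horizontal lift of the distinguished section is the same for both connections and the direction $l^{H}$ is unambiguous. Contracting the difference $\mathfrak{H}^{\mu}_{\alpha\beta}-\mathfrak{H}^{\ast\mu}_{\alpha\beta}$ from (\ref{inds}) with $l^{\alpha}$, each of the three terms acquires a vanishing factor: the first carries $D^{\tau}_{\alpha}l^{\alpha}=0$ by Lemma \ref{nD}, while the second and third carry a contraction $\gamma^{\mu}_{\tau\alpha}l^{\alpha}$ respectively $\gamma^{\epsilon}_{\alpha\beta}l^{\alpha}$ of the Cartan-type coefficients with $l$, which vanishes because the Cartan tensor is annihilated by the distinguished section. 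Hence $l^{\alpha}(\mathfrak{H}^{\mu}_{\alpha\beta}-\mathfrak{H}^{\ast\mu}_{\alpha\beta})=0$, and since $\mathfrak{h}^{\mu}_{\alpha\beta}=\mathfrak{h}^{\ast\mu}_{\alpha\beta}$, the covariant differentiations $\stackrel{H}{\nabla}_{l^{H}}$ and $\stackrel{H}{\nabla^{\ast}}_{l^{H}}$ agree on every section of $\pi^{\ast}TM$, as claimed.
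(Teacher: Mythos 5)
Your proof is correct and follows essentially the same route as the paper: both directions of the equivalence rest on identifying the coincidence of the two Finsler--Ehresmann parts with the vanishing of $D$ (you via the frame relation (\ref{reiei}), the paper via Proposition \ref{cnl}, which are equivalent statements), and the converse on substituting $D=0$ into (\ref{inds}). Your treatment of the final assertion is in fact more complete than the paper's one-line appeal to Lemma \ref{nD}, since you also supply the vanishing of the contractions of the Cartan-type coefficients $\widetilde{\mathfrak{h}}^{k}_{ij}$ with the distinguished section, which is needed to kill the two terms of (\ref{inds}) not already annihilated by $D(l^{H})=0$.
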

\begin{proof}
 If the induced Hashiguchi connection coincides with the intrinsic one, then the Finsler-Ehresmann form $\hat{\theta}=\theta$ and by the Proposition \ref{cnl}, the 
 deformation tensor $D= 0$. Reciprocally, if the tensor $D=0$ then by the relation (\ref{inds}), the induced and intrinsic Hashiguchi connection coincides.
 The last assertion is obtained by the Lemma \ref{nD}. 
\end{proof}

\section*{Acknowledgments}
The second author would like to thank the University of KwaZulu-Natal and the DST-NRF Centre of Excellence in Mathematical and Statistics Sciences (CoE-maSS) for financial support.

 \bibliographystyle{plain}

\end{document}